\documentclass[11pt]{amsart}

\usepackage{amsmath,amssymb,amsthm}
\usepackage{a4wide}
\usepackage{hyperref}

\usepackage{graphicx}
\usepackage{xypic}
\entrymodifiers={+!!<0pt,\fontdimen22\textfont2>}
\usepackage[all]{xy}

\newtheoremstyle{myremark} 
    {7pt}                    
    {7pt}                    
    {}  	                 
    {}                           
    {\bf}       	         
    {.}                          
    {.5em}                       
    {}  

\theoremstyle{plain}
\newtheorem{lemma}{Lemma}[section]
\newtheorem{theorem}[lemma]{Theorem}
\newtheorem{fact}[lemma]{Fact}
\newtheorem{definition}[lemma]{Definition}
\newtheorem{corollary}[lemma]{Corollary}
\newtheorem{proposition}[lemma]{Proposition}
\newtheorem{conjecture}[lemma]{Conjecture}
\newtheorem{problem}[lemma]{Problem}
\newtheorem{claim}{Claim}

\theoremstyle{myremark}
\newtheorem{remark}[lemma]{Remark}

\newtheorem{notdef}[lemma]{Notation/Definition}

\newcommand{\nat}{\mathbb{N}}

\newcommand{\er}{\mathbb{R}}

\newcommand{\cl}{\mathrm{Cl}}

\renewcommand{\subset}{\subseteq}

\newcommand{\lk}{\mathrm{lk}}

\newcommand{\Michal}[1]{}
\newcommand{\Honza}[1]{}
\newcommand{\remove}[1]{}

\begin{document}
\title[An upper bound theorem for a class of flag weak pseudomanifolds]{An upper bound theorem for a class of flag weak pseudomanifolds}

\author[Micha{\l} Adamaszek]{Micha{\l} Adamaszek}
\address{ALTA and Fachbereich Mathematik, Universit\"at Bremen
      \newline Bibliothekstr. 1, 28359 Bremen, Germany}
\email{aszek@mimuw.edu.pl}
\thanks{Research supported by a DFG grant.}


\begin{abstract}
If $K$ is an odd-dimensional flag closed manifold, flag generalized homology sphere or a more general flag weak pseudomanifold with sufficiently many vertices, then the maximal number of edges in $K$ is achieved by the balanced join of cycles. 

The proof relies on stability results from extremal graph theory. In the case of manifolds we also offer an alternative (very) short proof utilizing the non-embeddability theorem of Flores. 

The main theorem can also be interpreted without the topological contents as a graph-theoretic extremal result about a class of graphs such that 1) every maximal clique in the graph has size $d+1$ and 2) every clique of size $d$ belongs to exactly two maximal cliques.
\end{abstract}
\maketitle

\section{Introduction}
\label{section:intro}

If $K$ is a finite simplicial complex of dimension $d$ we write
$$(f_{-1}(K),f_0(K),\ldots,f_d(K))$$
for its face vector, or $f$-vector, meaning that $f_i(K)$ is the number of $i$-dimensional faces of $K$. A classical problem in enumerative combinatorics is to characterize the $f$-vectors of interesting classes of simplicial complexes.

Our work is motivated by this kind of a question for \emph{flag spheres}, although the methods we use work for a larger class of flag weak pseudomanifolds, see Definition~\ref{def:weak}. A simplicial complex is called \emph{flag} if every minimal non-face of $K$ has two vertices. This means that the faces of $K$ are precisely the cliques in the graph $G=K^{(1)}$, the $1$-skeleton of $K$. In this case we write $K=\cl(G)$ and say $K$ is the \emph{clique complex} of $G$. The interest in the $f$-vectors of flag spheres is inspired by the Charney-Davis conjecture \cite{CD} and its generalizations \cite{Gal,NevoPet,NevoPetTenner,MurNevo}.

The properties of $f$-vectors of flag complexes can be quite different from those of arbitrary simplicial complexes. For example, a simplicial $(2s-1)$-sphere $K$ satisfies
$$2sf_0(K)-s(2s+1)\leq f_1(K)\leq \frac12 f_0(K)^2-\frac12 f_0(K)$$
with both inequalities best possible. On the other hand, for a \emph{flag} simplicial $(2s-1)$-sphere the corresponding inequalities ought to be
\begin{equation}\label{pupcia}(4s-3)f_0(K)-8s(s-1)\leq f_1(K)\leq \frac{s-1}{2s} f_0(K)^2+f_0(K).\end{equation}
These are known to hold when $s=1$ (obvious) and $s=2$ (\cite{DOkun,Gal}) while for $s\geq 3$ both upper and lower bounds have the status of conjectures \cite{Gal, NevoPet}. One consequence of our main result is that the upper bound in \eqref{pupcia} holds for flag $(2s-1)$-spheres for any $s$, provided that the number of vertices $f_0(K)$ is sufficiently large.

Our main result, Theorem~\ref{thm:weakpseudo}, holds for a class of complexes which we now define.

\begin{definition}
\label{def:weak}
A \emph{$d$-dimensional weak pseudomanifold} is a pure simplicial complex of dimension $d$ such that every $(d-1)$-dimensional face belongs to exactly two faces of dimension $d$.
\end{definition}

Note that the only $0$-dimensional weak pseudomanifold is the complex $S^0$ with two isolated vertices and that every $1$-dimensional weak pseudomanifold is a disjoint union of cycles. Moreover, if $K$ is a $d$-dimensional weak pseudomanifold and $\sigma$ is any face then $\lk_K\sigma$ a weak pseudomanifold of dimension $d-|\sigma|$. Finally, note that the join of weak pseudomanifolds of dimensions $d$ and $d'$ is a weak pseudomanifold of dimension $d+d'+1$.

We can now state the main theorem.

\begin{theorem}
\label{thm:weakpseudo}
For every $s\geq 1$ and every constant $C>0$ there is an $n_0=n_0(s,C)$ with the following property. If $K$ is a $(2s-1)$-dimensional flag weak pseudomanifold with $f_0(K)\geq n_0$ and $f_s(K)\leq Cf_0(K)^s$, then
$$f_1(K)\leq \frac{s-1}{2s}f_0(K)^2+f_0(K).$$
\end{theorem}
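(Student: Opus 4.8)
The plan is to strip away the topology after one observation and then solve a graph‑theoretic extremal problem, guided throughout by the conjectured extremal example. Write $G=K^{(1)}$, so $K=\cl(G)$, $n=f_0(K)=|V(G)|$, $f_1(K)=|E(G)|$, and $f_s(K)=N_{s+1}(G)$, where $N_r(G)$ denotes the number of $r$‑cliques of $G$. The flag weak pseudomanifold hypothesis is equivalent to three conditions on $G$: (i) $G$ is $K_{2s+1}$‑free; (ii) $K$ is \emph{pure}, i.e.\ every clique of size $\le 2s$ extends to a $2s$‑clique; (iii) every $(2s-1)$‑clique lies in exactly two $2s$‑cliques. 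I would first record the link lemmas that flagness makes available: for every clique $\sigma$ of $G$, $\lk_K\sigma=\cl\bigl(G[N_G(\sigma)]\bigr)$ is again a flag weak pseudomanifold, of dimension $2s-1-|\sigma|$. Two special cases carry the argument. If $|\sigma|=2s-1$, then $G[N_G(\sigma)]$ is the edgeless graph on two vertices, namely the two completions of $\sigma$ to a $2s$‑clique, which cannot be adjacent lest $G$ contain $K_{2s+1}$. If $|\sigma|=2s-2$, then $G[N_G(\sigma)]$ is a $1$‑dimensional flag weak pseudomanifold, hence a disjoint union of cycles of length $\ge 4$; in particular it is $2$‑regular and triangle‑free. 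Thus, once $2s-2$ mutually adjacent vertices are fixed, the common neighbourhood in $G$ is forced to be a union of long cycles — this local rigidity is exactly what the dense extremal graph of the plain $K_{2s+1}$‑free problem lacks.

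Next I would fix the target. The balanced join $C_1*\cdots*C_s$ of $s$ cycles on $n/s$ vertices each is obtained from the Tur\'an graph $T(n,s)$ by adding, inside each of the $s$ parts, the $n/s$ edges of a cycle; it has exactly $\tfrac{s-1}{2s}n^2+n$ edges and clique number $2s$, and its complement is a disjoint union of $s$ copies of $\overline{C_{n/s}}$. This fixes the shape of the theorem: it is equivalent to the assertion that $G$ is \emph{almost $K_{s+1}$‑free}, in the sense that one may delete at most $n$ edges of $G$ and destroy every $K_{s+1}$ — for then $|E(G)|\le \mathrm{ex}(n,K_{s+1})+n=\tfrac{s-1}{2s}n^2+n$ by Tur\'an's theorem. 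Note that $\tfrac{s-1}{2s}n^2$ lies a constant factor \emph{below} the relevant Tur\'an threshold $\mathrm{ex}(n,K_{2s+1})=\tfrac{2s-1}{4s}n^2\,(1+o(1))$, so one cannot simply quote $K_{2s+1}$‑stability; conditions (i)--(iii) must make up the deficit.

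The heart of the argument is therefore extremal‑graph‑theoretic; assume $|E(G)|>\tfrac{s-1}{2s}n^2+n$ and seek a contradiction. In the dense regime $|E(G)|\ge\bigl(\tfrac{s-1}{2s}+\varepsilon_0\bigr)n^2$ one argues by supersaturation: there are so many $K_{s+1}$'s that, because $K$ is pure, $N_{s+1}(G)=f_s(K)$ exceeds $Cn^s$ — and this is precisely where the hypothesis $f_s(K)\le Cn^s$ is used, to exclude the ``fat'' flag weak pseudomanifolds that would otherwise beat the bound. In the complementary regime a stability argument should force $G$, up to $o(n^2)$ edge modifications, into the form $T(n,s)$ together with a graph $H$ inside the parts; equivalently $\overline G$ is $o(n^2)$‑close to a disjoint union of $s$ graphs on $\approx n/s$ vertices each. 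Feeding this back into the link lemmas pins $H$ down: applying the $|\sigma|=2s-2$ lemma to a clique spanned by two $H$‑adjacent vertices in each of $s-1$ of the parts, its common neighbourhood must be a disjoint union of long cycles contained in the remaining part, so $H$ restricted to each part is a disjoint union of cycles of length $\ge 4$ and contributes only about $n/s$ edges per part; meanwhile the $|\sigma|=2s-1$ lemma together with (iii) forbids any cross‑pair of vertices from being non‑adjacent in $G$ except on an $o(n^2)$ error set, which the $f_s$‑bound then eliminates. Cleaning these defects away one vertex at a time — each deviation from the join‑of‑cycles pattern either creates an excess $K_{s+1}$ or drives some $(2s-1)$‑clique into three or more $2s$‑cliques, contradicting (iii) — upgrades the conclusion to the exact inequality for $n\ge n_0(s,C)$.

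The step I expect to be the genuine obstacle is this last passage, from ``stable'' to ``exact'': stability alone yields only $f_1(K)\le\tfrac{s-1}{2s}n^2+o(n^2)$, and squeezing out the sharp additive $+n$ requires a careful local analysis of near‑extremal configurations, using both condition (iii) and the bound $f_s(K)\le Cn^s$ at full strength. (For flag \emph{manifolds} the excerpt advertises a much shorter route via Flores's non‑embeddability theorem, which I would try to realise by exhibiting a complete multipartite subcomplex — a candidate being $K_3^{*(s+1)}$ — that would have to embed in a sphere of too small a dimension; but such an argument seems to reach only the non‑sharp $o(n^2)$‑form without extra manifold‑specific input, and does not cover the weak pseudomanifold generality of Theorem~\ref{thm:weakpseudo}.)
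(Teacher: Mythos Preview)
Your plan is essentially the paper's: contradict $|E(G)|>\tfrac{s-1}{2s}n^2+n$ by combining a clique-counting/supersaturation step (to use $f_s(K)\le Cn^s$) with a stability step that puts $G$ close to $T(n,s)$, and then exploit the link structure of a $(2s-1)$-leveled graph to pin down the parts and the exceptional vertices exactly. The one point where the paper is sharper than your sketch, and which directly addresses what you call ``the genuine obstacle,'' is the choice of stability input. You invoke a generic stability giving only $o(n^2)$ closeness to $T(n,s)$, and then hope to bootstrap; the paper instead uses the Lov\'asz--Simonovits theorem, which under the hypotheses $|E(G)|=\tfrac{s-1}{2s}n^2+k$ and $k_{s+1}(G)\le Ckn^{s-1}$ yields closeness to some $K^s(n_1,\dots,n_s)$ with only $O(k)$ edge changes and $|n_i-n/s|=O(\sqrt{k})$. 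Since Bollob\'as's clique-counting forces $k=O(n)$ here, this means only $O(n)$ bad edges and hence, after stripping off high-defect vertices, a constant-size exceptional set $X$. With $|X|=O(1)$ the ``stable to exact'' passage becomes a finite list of degree claims (each $G[S_i]$ is triangle-free of maximum degree $2$; each $v\in X$ has $\deg(v,\bigcup S_i)\le n(1-\tfrac1t-\Omega(1))$), proved by your link lemmas together with an inductive statement that an $m$-large type-$(t,\eta,C)$ graph cannot be $d$-leveled for $d\le 2t-2$; summing degrees then gives the exact $+n$. Your parenthetical about the manifold case is on target: the Appendix does precisely the van Kampen--Flores argument, combined with the Erd\H os--Simonovits extremal result for $K^{s+1}(1,3,\dots,3)$-free graphs, and indeed that route does not reach the weak-pseudomanifold generality.
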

The upper bound is clearly best possible, since equality holds if $K$ is the join of $s$ copies of the $\frac{1}{s}f_0(K)$-vertex cycle.\footnote{In general it should be understood that the factors of the join are as balanced as possible, i.e. all have size $\lfloor\frac{f_0(K)}{s}\rfloor$ or $\lceil\frac{f_0(K)}{s}\rceil$. Here and in the future we allow ourselves this level of language imprecision in informal or asymptotic statements.}

\medskip
The statement of the theorem may seem a bit technical, but it implies upper bounds for some quite concrete classes of complexes, as we now explain. 

Recall that if $\sigma\in K$ is a face then the \emph{link} of $\sigma$, denoted $\lk_K\sigma$ is the subcomplex $\{\tau\in K~|~\tau\cap\sigma=\emptyset,\ \tau\cup\sigma\in K\}$. A simplicial complex is \emph{pure} if all maximal faces have the same dimension. A pure $d$-dimensional complex $K$ is a \emph{simplicial $d$-sphere} if for every face $\sigma$ the link $\lk_K\sigma$ is homeomorphic to the $(d-|\sigma|)$-dimensional sphere and it is a \emph{simplicial manifold} (closed, compact) if that condition holds for all $\sigma\neq\emptyset$. Further, we say $K$ is \emph{Gorenstein$^*$}, resp. \emph{Eulerian}, if for every face $\sigma$ the link $\lk_K\sigma$ has the same homology groups, resp. the same Euler characteristic, as the $(d-|\sigma|)$-sphere. Gorenstein$^*$ complexes are also called \emph{generalized homology spheres}. The relation between these classes of pure simplicial complexes is best explained by the diagram:
$$
\xymatrix@=1em{
2s\textrm{-sphere} \ar@2[d]\ar@2[r]& 2s\textrm{-manifold} \ar@2[dr]^{s\geq 1}& \\
\textrm{Gorenstein}^*\ar@2[r] & \textrm{Eulerian}\ar@2[r] & \textrm{weak pseudomanifold} \\
(2s-1)\textrm{-sphere} \ar@2[u]\ar@2[r] & (2s-1)\textrm{-manifold}\ar@2[u] & \\
}
$$
We will also need the $h$-vector $(h_0(K),\ldots,h_{d+1}(K))$ which is a linear transformation of the $f$-vector, defined by the polynomial identity $\sum_{i=0}^{d+1} h_i(K)x^{d+1-i}=\sum_{i=-1}^d f_{i}(K)(x-1)^{d-i}$. Any Eulerian complex $K$ satisfies the \emph{Dehn-Sommerville equations} $h_i(K)=h_{d+1-i}(K)$ for $i=0,\ldots,d+1$. 

\medskip
We can now present the more specific consequences of Theorem~\ref{thm:weakpseudo}.

\begin{corollary}
\label{thm:mainodd}
For every $s\geq 1$ the inequality
\begin{equation}
\label{eq:thmmain}
f_1(K)\leq \frac{s-1}{2s}f_0(K)^2+f_0(K)
\end{equation}
holds whenever $K$ is a sufficiently large
\begin{itemize}
\item[a)] $(2s-1)$-dimensional flag weak pseudomanifold which satisfies the middle Dehn-Sommerville equation $h_{s-1}(K)=h_{s+1}(K)$,
\item[b)] flag simplicial $(2s-1)$-manifold, 
\item[c)] flag $(2s-1)$-Gorenstein$^*$ complex.
\end{itemize}
\end{corollary}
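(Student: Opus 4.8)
The plan is to derive all three statements from Theorem~\ref{thm:weakpseudo}, whose only hypothesis not already built into the classes in question is the bound $f_s(K)\le Cf_0(K)^s$ on the number of middle-dimensional faces. So the entire task is to produce such a bound with a constant $C$ that depends only on $s$; everything else is bookkeeping.

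\emph{Part a).} I would read the required bound off the hypothesis via the standard $h$-to-$f$ inversion. For a complex of dimension $d=2s-1$ one has $d+1=2s$ and
$$h_k(K)=\sum_{i=0}^{k}(-1)^{k-i}\binom{2s-i}{k-i}f_{i-1}(K),\qquad 0\le k\le 2s.$$
The term $i=s+1$ shows that $f_s(K)$ enters $h_{s+1}(K)$ with coefficient $\binom{s-1}{0}=1$, while $h_{s-1}(K)$ is a linear combination of $f_{-1}(K),\dots,f_{s-2}(K)$ only and contains no $f_s(K)$. Hence the middle Dehn--Sommerville equation $h_{s-1}(K)=h_{s+1}(K)$, which is assumed, can be solved for $f_s(K)$ in the form
$$f_s(K)=\sum_{j=-1}^{s-1}c_j\,f_j(K),$$
with coefficients $c_j=c_j(s)$ that depend only on $s$. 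Since $K$ is a simplicial complex on $n=f_0(K)$ vertices, $f_j(K)\le\binom{n}{j+1}\le n^{j+1}\le n^{s}$ for every $-1\le j\le s-1$, so $f_s(K)\le C_s\,n^{s}$ with $C_s=\sum_{j=-1}^{s-1}|c_j(s)|$. Feeding $C=C_s$ into Theorem~\ref{thm:weakpseudo} yields \eqref{eq:thmmain} as soon as $f_0(K)\ge n_0(s,C_s)$, which is the intended meaning of ``sufficiently large''.

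\emph{Parts b) and c).} I would reduce both to a). A flag simplicial $(2s-1)$-manifold and a flag $(2s-1)$-Gorenstein$^*$ complex are both Eulerian: for Gorenstein$^*$ complexes this is immediate from the definition (equal homology forces equal Euler characteristic), and for $(2s-1)$-manifolds it is the classical implication already recorded in the diagram above --- the nonempty links are homology spheres by a local-homology computation, and $\chi(K)=0=\chi(S^{2s-1})$ because $2s-1$ is odd. An Eulerian complex is a weak pseudomanifold and satisfies all Dehn--Sommerville equations, in particular $h_{s-1}(K)=h_{(d+1)-(s-1)}(K)=h_{s+1}(K)$ with $d=2s-1$. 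Hence both classes are covered by part a).

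There is no real obstacle here; the one thing to check carefully is in part a), namely that the single middle Dehn--Sommerville equation already isolates $f_s(K)$ as a universal linear combination of $f_{-1}(K),\dots,f_{s-1}(K)$. That is exactly why the hypothesis of part a) is stated with only that equation: $f_s(K)$ occurs in $h_{s+1}(K)$ with coefficient $1$ and in none of $h_0(K),\dots,h_s(K)$, so no further relation is needed. The only other point to keep in mind is that the constant $C_s$ obtained this way depends on $s$ alone, which is what makes the appeal to Theorem~\ref{thm:weakpseudo} legitimate, since its threshold $n_0$ is permitted to depend on $s$ and on $C$.
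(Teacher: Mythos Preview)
Your proposal is correct and follows essentially the same route as the paper: solve the middle Dehn--Sommerville equation for $f_s(K)$ as a universal linear combination of $f_{-1}(K),\ldots,f_{s-1}(K)$, bound each of these trivially by $f_0(K)^s$, and invoke Theorem~\ref{thm:weakpseudo}; then observe that $(2s-1)$-manifolds and $(2s-1)$-Gorenstein$^*$ complexes are Eulerian, hence satisfy that equation. Your extra care in verifying that $f_s(K)$ appears in $h_{s+1}(K)$ with coefficient~$1$ (and not at all in $h_{s-1}(K)$) is a welcome detail the paper leaves implicit.
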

\begin{proof}
Of course b) and c) follow from a), since every simplicial $(2s-1)$-manifold or Gorenstein$^*$ complex is Eulerian, hence it satisfies \emph{all} the Dehn-Sommerville equations. To prove a) rewrite the relation $h_{s-1}(K)=h_{s+1}(K)$ using the face numbers of $K$. It takes the form of a linear equation
$$f_s(K)=a_{s,s-1}f_{s-1}(K)+\ldots+a_{s,0}f_0(K)+a_{s,-1}f_{-1}(K)$$
where $a_{s,i}$ are some universal constants. Then we have
$$f_s(K)\leq\sum_{i=-1}^{s-1} |a_{s,i}|{f_0(K)\choose i+1}\leq \Big(\sum_{i=-1}^{s-1} |a_{s,i}|\Big)\cdot f_0(K)^s$$
hence we are in the situation of Theorem~\ref{thm:weakpseudo}.
\end{proof}

\subsection*{Related work} The inequality \eqref{eq:thmmain} is obvious for any $1$-dimensional weak pseudomanifold and  for $s=2$ it is known to hold without any restrictions on $f_0(K)$ if $K$ is a flag $3$-manifold or a flag $3$-Gorenstein$^*$ complex, see \cite{Gal}. In \cite{Idiots} the authors used similar methods to study the case $s=2$, but the purpose of that work was to verify a conjecture of \cite{Gal} that if $f_1(K)$ is sufficiently close to the extremal value then $K$ is a join of two cycles. Here we move from $s=2$ to arbitrary $s$, but we are only interested in the extremal value itself, although it seems reasonable to conjecture that a similar approximation result also holds. 

Finally if $K$ is $d$-dimensional and Eulerian then one can define the $\gamma$-vector $(\gamma_0(K),\ldots,\gamma_{s}(K))$ of $K$, where $s=\lfloor\frac{d+1}{2}\rfloor$, by the identity
$\sum_{i=0}^{d+1} h_i(K)x^i=\sum_{i=0}^s \gamma_{i}(K)x^i(x+1)^{d+1-2i}$
as in \cite{Gal}. It is conjectured that if $K$ is a flag $d$-Gorenstein$^*$ complex then the vector $\gamma(K)$ is non-negative and even more generally that it is an $f$-vector of some flag simplicial complex \cite{Gal,NevoPet}. The latter requires that $\gamma(K)$ satisfies the so-called Frankl-Furedi-Kalai inequalities \cite{Andy}, and the only ones which are known to hold in arbitrary dimension are $\gamma_0(K)=1$ and $\gamma_1(K)\geq 0$. Our result proves (for sufficiently large complexes of odd dimension $d=2s-1$) another inequality from that set, namely $\gamma_2(K)\leq\frac{s-1}{2s}\gamma_1(K)^2$, which is precisely equivalent to \eqref{eq:thmmain}.

\subsection*{A formulation in terms of graphs}
Since a flag complex is completely determined by its $1$-skeleton, Theorem~\ref{thm:weakpseudo} can be equivalently phrased in the language of graphs.

First we introduce some notation. If $G$ is a graph and $X\subset V(G)$ then $G[X]$ is the induced subgraph of $G$ with vertex set $X$. For any $v\in V(G)$ and $X\subset V(G)$ we define the neighborhood of $v$ as $N_G(v)=\{u~:~uv\in E(G)\}$ and we set $$\deg_G(v,X)=|X\cap N_G(v)|.$$ We will usually omit $G$ from the notation and write simply $\deg(v,X)$.

By $K^s(n_1,\ldots,n_s)$ we denote the complete $s$-partite graph with parts of sizes $n_1,\ldots,n_s$. By $k_s(G)$ we mean the number of $s$-element cliques in $G$. If $K=\cl(G)$ then of course $f_s(K)=k_{s+1}(G)$ for all $s$.

If $\sigma$ is a clique in $G$ then we define the \emph{link} of $\sigma$ as
$$\lk_G\sigma=G\left[\bigcap_{v\in\sigma} N_G(v)\right]\;.$$
In particular $\lk_Gv=G[N_G(v)]$ for a vertex $v\in V(G)$. Note that we have $\lk_{\cl(G)}\sigma=\cl(\lk_G\sigma)$, where each link should be understood in the appropriate sense. 

Finally, we define the \emph{join} $G\ast H$ of two graphs $G$ and $H$ as the disjoint union of $G$ and $H$ 
together with all the edges between $V(G)$ and $V(H)$. Once again we have $\cl(G\ast H)=\cl(G)\ast\cl(H)$, where on the right-hand side we have the simplicial join.

\medskip
The sole purpose of the next definition is to introduce a short substitute for the phrase ``$G$ is the $1$-skeleton of a $d$-dimensional flag weak pseudomanifold''.

\begin{definition}
\label{def:dlevel}
A graph $G$ will be called \emph{$d$-leveled} if it is the $1$-skeleton of a $d$-dimensional flag weak pseudomanifold. Explicitly,
\begin{itemize}
\item every maximal clique in $G$ has size $d+1$, and 
\item for every clique $\sigma$ of size $d$ the link $\lk_G\sigma$ consists of two isolated vertices.
\end{itemize}
\end{definition}
As a consequence, if $G$ is $d$-leveled and $\sigma$ is a clique in $G$ then $\lk_G\sigma$ is $(d-|\sigma|)$-leveled. In particular, in a $d$-leveled graph the link of every clique of size $d-1$ is a disjoint union of cycles. 

The next theorem is a straightforward equivalent formulation of Theorem~\ref{thm:weakpseudo}.
\begin{theorem}
\label{thm:dlevelbound}
For every $s\geq 1$ and every constant $C>0$ there is an $n_0=n_0(s,C)$ with the following property. If $G$ is a $(2s-1)$-leveled graph with $n\geq n_0$ vertices and with $k_{s+1}(G)\leq Cn^s$, then
$$|E(G)|\leq \frac{s-1}{2s}n^2+n.$$
\end{theorem}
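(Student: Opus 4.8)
The plan is to prove Theorem~\ref{thm:dlevelbound} by an extremal/stability argument on the graph $G$, treating the edge count as a statement about the Turán-type density of $G$ under the structural constraints of being $(2s-1)$-leveled. First I would recall that the extremal example, the balanced join of $s$ cycles, is close to the Turán graph $K^s(n/s,\ldots,n/s)$: a join of $s$ cycles has all but $sn/s = n$ of the $\binom{n}{2}$ edges between parts, losing exactly one perfect matching worth (roughly $n$ non-edges) inside each cyclic factor. So the target bound $\frac{s-1}{2s}n^2 + n$ is essentially the Turán bound $\mathrm{ex}(n,K_{s+1}) = (1-\tfrac1s)\tfrac{n^2}{2}$ plus a linear correction, and the content is that a $(2s-1)$-leveled graph cannot be too much denser than $K_{s+1}$-free while still carrying only $O(n^s)$ copies of $K_{s+1}$.

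The key steps, in order. (1) Use the hypothesis $k_{s+1}(G)\le Cn^s$ together with a supersaturation / Kruskal–Katona type counting argument to show $G$ is "almost $K_{s+1}$-free": since $G$ has few $(s{+}1)$-cliques but (we will see) many edges, a Ramsey–Turán or removal-type argument forces $G$ to be within $o(n^2)$ edges of an $s$-partite graph. More concretely, I would invoke the Erdős–Simonovits stability theorem: a graph on $n$ vertices with $(1-\tfrac1s)\tfrac{n^2}{2} - o(n^2)$ edges and only $o(n^{s+1})$ copies of $K_{s+1}$ (here $O(n^s) = o(n^{s+1})$) must be obtainable from $K^s(n_1,\ldots,n_s)$ by adding and deleting $o(n^2)$ edges. (2) Having located this near-$s$-partition $V = V_1 \cup \cdots \cup V_s$, I would exploit the \emph{leveled} structure to upgrade "approximately $s$-partite" to a precise description. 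The point is that in a $(2s-1)$-leveled graph the link of every $(2s-2)$-clique is $S^0$ and, descending, links of small cliques are themselves leveled of lower odd dimension; in particular links of $(s{-}1)$-cliques... more usefully, I would pick a suitable $(s{-}1)$-dimensional clique hitting one vertex from $s{-}1$ of the parts and analyze its link, which should be $1$-leveled, i.e.\ a disjoint union of cycles, living essentially inside the remaining part. This rigidity is what pins down the "defect" inside each part to be at least a perfect matching's worth of non-edges. (3) Combine: the number of edges inside the parts is bounded because each part, being governed by cyclic links, is sparse, while the number of cross edges is at most $\binom{n}{2} - \sum_i \binom{n_i}{2}$, maximized at the balanced partition giving $\frac{s-1}{2s}n^2$; the extra $+n$ absorbs the at-most-$n$ edges inside the cyclic factors. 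Convexity forces the balanced split, and any deviation from balance strictly loses edges, which handles the $n_0$ threshold.

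The main obstacle I expect is step (2): converting the soft $o(n^2)$-approximate $s$-partition from stability into the exact bound with the sharp linear term $+n$ and no error. Stability only tells us $G$ differs from a complete $s$-partite graph by $o(n^2)$ edges; to nail the constant we must show that the "missing" edges inside the parts number at least (roughly) $n$ and that no surplus cross-structure can compensate. This is where the leveled hypothesis must be used in an essential, non-asymptotic way — presumably by a local argument: take any vertex $v$, look at $\lk_G v$, which is $(2s-2)$-leveled; its $1$-skeleton-as-flag-weak-pseudomanifold structure, combined with the inductive bound on $k_s$ and an inductive hypothesis on $|E(\lk_G v)|$, gives a per-vertex degree bound, and summing $\sum_v \deg(v) = 2|E(G)|$ yields the result. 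So realistically the proof is an induction on $s$: the base case $s=1$ is "a disjoint union of cycles has $n$ edges," and the inductive step feeds the $(2s-2)$-dimensional (even) case — or rather a paired pseudomanifold-link argument — back in. Making the even-dimensional intermediate estimate and the cross-edge accounting fit together cleanly, with all constants tracked so that $n_0(s,C)$ can be extracted, is the delicate part; the extremal-graph stability input itself is essentially off the shelf.
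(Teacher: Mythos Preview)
Your outline has the right large-scale shape --- stability plus a structural cleanup using the leveled hypothesis --- but there are two concrete gaps that the paper's proof closes differently.

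First, the stability input. Erd\H{o}s--Simonovits (or removal-lemma) stability only tells you that $G$ differs from a complete $s$-partite graph by $o(n^2)$ edges, which in turn yields an exceptional set $X$ of size $o(n)$, not $O(1)$. That is not enough to extract the sharp linear term $+n$: the final edge count must absorb the contribution of $X$, and with $|X|$ as large as $o(n)$ the calculation fails. The paper instead invokes the Lov\'asz--Simonovits theorem, which under the hypothesis $k_{s+1}(G)\le Cn^s$ and $|E(G)|=\frac{s-1}{2s}n^2+k$ gives that $G$ is within $O(k)$ edges of a complete $s$-partite graph; since here $k=O(n)$, this yields an exceptional set $X$ of \emph{constant} size, which is exactly what makes the final accounting go through.

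Second, your proposed induction via vertex links does not work as stated. The link of a vertex in a $(2s-1)$-leveled graph is $(2s-2)$-leveled, and there is no analogue of the theorem in even dimension; indeed (as the paper notes) there exist $2$-leveled graphs with $\Omega(n^{2-\varepsilon})$ edges, so one cannot hope for a per-vertex degree bound of the form $(1-\tfrac1s)n+O(1)$ from an inductive edge estimate on $\lk_G v$. The paper's induction (Proposition~3.2) is on a different auxiliary statement: a sufficiently large graph of type $(t,\eta_t,C)$ cannot be $d$-leveled for any $d\le 2t-2$. This is proved by passing to links of maximal cliques inside a single part $S_i$, dropping $t$ by one, and it sidesteps the even-dimensional edge-count problem entirely. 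That proposition is then used to show each $G[S_i]$ is triangle-free and of maximum degree $2$, and to control the degree of the constantly many vertices in $X$; the sharp bound follows from a direct count. Your idea of looking at $1$-leveled links is present in the paper, but to obtain one you need a $(2s-2)$-clique (two vertices from each of $s-1$ parts), not an $(s-1)$-clique.
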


\subsection*{Organization of the paper} In Section~\ref{section:extremal} we outline the proof of Theorem~\ref{thm:dlevelbound} and we introduce the toolbox of extremal graph theory. The main work is done in Section~\ref{section:dlevel}, where we apply the extremal techniques in the context of $(2s-1)$-leveled graphs. The proof of Theorem~\ref{thm:dlevelbound} appears at the end of that section. In Section~\ref{section:proofs} we present some further results and open problems.

The Appendix contains an alternative, very short proof of Corollary~\ref{thm:mainodd}.b), that is an upper bound in the case when $K$ is a flag $(2s-1)$-manifold. The reader interested only in flag spheres or flag manifolds can jump directly there.

\section{Extremal graph theory}
\label{section:extremal}

Let us outline the main idea behind the proof of Theorem~\ref{thm:dlevelbound}. Suppose, contrary to what we want to show, that $G$ satisfies the assumption of Theorem~\ref{thm:dlevelbound} but $|E(G)|>\frac{s-1}{2s}n^2+n$. Then $G$ is dense enough to exceed the Turan bound for the existence of $(s+1)$-cliques. However, the number $k_{s+1}(G)$ of those cliques is relatively small, namely $O(n^s)$, while a ``typical'' graph with this edge density ought to have $\Theta(n^{s+1})$ cliques of size $(s+1)$. This exceptional situation puts strong restrictions on the structure of $G$, which must be ``similar'', in a sense made precise in Theorem~\ref{thm:lovasz}, to the graph $K^s(\frac{n}{s}, \ldots,\frac{n}{s})$. It is a consequence of a phenomenon called \emph{supersaturation} \cite{ErdosSatur}, which is one of the basic principles of extremal graph theory.

At this point we start exploiting the extra property that $G$ is $(2s-1)$-leveled. This rigidifies the structure of $G$, up to the point of showing that it is either a join of $s$ cycles or it has even fewer edges than such a join. This approach, known as the \emph{stability method} and introduced in \cite{Nonexistent}, is relatively standard in extremal graph theory. In any case, we conclude that $|E(G)|\leq \frac{s-1}{2s}n^2+n$, contrary to the initial assumption. This part of the proof is carried out in Section~\ref{section:dlevel}. 

\medskip
The similarity to the complete $t$-partite graph, and more generally the similarity to the join of $t$ graphs, is best captured by the next definition.

\begin{definition}
\label{def:type}
We say a graph $G$ with a partition $V(G)=S_1\sqcup\cdots\sqcup S_t\sqcup X$ is of type $(t,\eta,C)$ if
\begin{itemize}
\item for every $i$, every $v\in S_i$ and every $j\neq i$ we have $\deg(v,S_j)\geq |S_j|(1-\eta)$,
\item $|X|\leq C$.
\end{itemize}
Moreover, we say such the partition is
\begin{itemize}
\item $m$-large if $|S_i|\geq m$ for all $i$,
\item $(n,\alpha)$-flat if $|V(G)|=n$ and for all $i$ we have
$$\frac{n}{t}(1-\alpha)\leq |S_i|\leq \frac{n}{t}(1+\alpha).$$
\end{itemize}
\end{definition}

If $G$ is a graph of type $(t,0,0)$ then $G=G[S_1]\ast \cdots\ast G[S_t]$. If $C\geq 0$ and $0\leq \eta\ll 1$ are constant then the graphs of type $(t,\eta,C)$ are ``almost'' joins, where ``almost'' refers to the possible existence of a constant number of exceptional vertices in $X$ and the absence of a small fraction of edges between the parts. One should think of $m$ as large and of $\alpha$ as very small, usually $0<\alpha\ll\eta$.

\medskip
Next we quote two results in extremal graph theory which will be needed later. The first one is a lower bound for the growth of the number of cliques in a graph whose number of edges passes the Turan bound.

\begin{theorem}
\label{thm:bolo}
For every $t\geq 1$ and $C\geq 0$ there exists $C'=C'(t,C)$ such that if $G$ is a graph with $n$ vertices and at least $\frac{t-1}{2t}n^2+C'n$ edges then $k_{t+1}(G)\geq Cn^t$.
\end{theorem}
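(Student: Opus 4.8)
The plan is to deduce this from the Kruskal–Katona-type supersaturation principle: once a graph has substantially more edges than the Turán threshold for $K_{t+1}$, it must contain not just one but a positive density of copies of $K_{t+1}$. I would proceed by a standard two-step argument. First, a counting lemma: if a graph $H$ on $N$ vertices has edge density strictly above $1-\frac1t$, say $e(H)\ge(1-\frac1t+\delta)\binom N2$, then $k_{t+1}(H)\ge c(t,\delta)N^{t+1}$ for some positive constant $c(t,\delta)$. This is exactly Erdős's supersaturation theorem \cite{ErdosSatur}, obtained by averaging the Erdős–Stone/Turán theorem over all $N_0$-vertex subsets of $H$ for a suitable fixed $N_0=N_0(t,\delta)$: each such subset that is dense enough contains a $K_{t+1}$, a positive fraction of subsets are dense enough, and each $K_{t+1}$ is counted in a bounded number of subsets, so $k_{t+1}(H)=\Omega(N^{t+1})$.

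Second, I would bridge the gap between ``$\frac{t-1}{2t}n^2+C'n$ edges'' (which is only slightly above the Turán bound $\frac{t-1}{2t}n^2=(1-\frac1t)\frac{n^2}2$, not above it by a constant fraction of $\binom n2$) and the hypothesis of the counting lemma, by a cleaning/deletion step. The point is that $C'n$ extra edges is more than enough slack: repeatedly delete a vertex of minimum degree as long as the current graph on $N$ vertices has fewer than, say, $(1-\frac1t+\delta)\binom N2$ edges, for a small fixed $\delta=\delta(t)>0$. Each deletion removes at most $(1-\frac1t+\delta)(N-1)$ edges, so after deleting $n-N$ vertices we have removed at most roughly $(1-\frac1t+\delta)\binom n2$ edges in total; choosing $\delta$ small and $C'$ large (depending on $t$, $C$, $\delta$) one checks that the process must stop while $N\ge\varepsilon n$ for some fixed $\varepsilon=\varepsilon(t)>0$ — otherwise we would have deleted more edges than $G$ has. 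At the stopping point the surviving induced subgraph $H$ on $N\ge\varepsilon n$ vertices has edge density at least $1-\frac1t+\delta$, so the counting lemma gives $k_{t+1}(G)\ge k_{t+1}(H)\ge c(t,\delta)N^{t+1}\ge c(t,\delta)\varepsilon^{t+1}n^{t+1}\ge Cn^t$ once $n$ is large; absorbing the ``$n$ large'' requirement into the choice of $C'$ (by further enlarging $C'$ so that the bound on $e(G)$ already forces $n$ to be large enough) completes the proof.

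The main obstacle — or rather the only place requiring genuine care — is calibrating the constants in the cleaning step: one must choose $\delta$ first (small, depending only on $t$), then verify that the edge-deletion accounting guarantees the process halts with $N$ linear in $n$, and only then choose $C'=C'(t,C)$ large enough to (i) make the supersaturation constant win against the loss of the factor $\varepsilon^{t+1}$ and the drop from $n^{t+1}$ to $n^t$, and (ii) ensure $n$ itself is large enough for all the asymptotic inequalities used. None of this is deep — it is the textbook Erdős–Simonovits supersaturation argument \cite{ErdosSatur} — so in the write-up I would most likely just cite it, perhaps with a one-line indication of the deletion trick, rather than reproving it from scratch.
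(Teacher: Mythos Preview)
Your deletion step does not work, and this is a genuine gap rather than a calibration issue. You claim the process ``delete a minimum-degree vertex while the current $N$-vertex graph has fewer than $(1-\tfrac1t+\delta)\binom{N}{2}$ edges'' must halt with $N\ge\varepsilon n$. But the excess over Tur\'an here is only $C'n$, not $\Theta(n^2)$, and that is too little slack to force a linear-sized dense subgraph. Concretely, take $G$ to be the Tur\'an graph $T_t(n)$ with $C'n$ extra edges thrown into one part. Every induced subgraph on $m$ vertices has at most $(1-\tfrac1t)\binom{m}{2}+O(m)+C'n$ edges, so to reach density $1-\tfrac1t+\delta$ you need $\delta\binom{m}{2}\le C'n+O(m)$, i.e.\ $m=O(\sqrt{n})$. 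Your process therefore runs down to $N=O(\sqrt{n})$, and the supersaturation bound then yields only $k_{t+1}(G)\ge c\,N^{t+1}=O(n^{(t+1)/2})$, far short of $Cn^t$ for $t\ge 2$. The accounting you sketch (``otherwise we would have deleted more edges than $G$ has'') does not produce a contradiction: the total deleted is at most $(1-\tfrac1t+\delta)\bigl[\binom{n}{2}-\binom{N}{2}\bigr]$, and this stays below $|E(G)|$ all the way down to $N=O(\sqrt{n})$.

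The underlying issue is that Erd\H{o}s--Simonovits supersaturation is the statement ``$\Theta(n^2)$ excess edges $\Rightarrow$ $\Theta(n^{t+1})$ copies of $K_{t+1}$''. The theorem you are asked to prove lives in a finer regime: ``$\Theta(n)$ excess edges $\Rightarrow$ $\Theta(n^t)$ copies''. This requires a quantitative clique-counting inequality that is linear in the excess. The paper obtains it by quoting Bollob\'as's explicit bound $k_{t+1}(n,m)\ge (t+1)^{-t}\bigl(2tmn^{t-1}-(t-1)n^{t+1}\bigr)$ for $m$ in the range $\bigl[\tfrac{t-1}{2t}n^2,\tfrac{t}{2(t+1)}n^2\bigr]$, from which the result follows by a one-line substitution $m=\tfrac{t-1}{2t}n^2+C'n$ with $C'=C(t+1)^t/(2t)$. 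A Moon--Moser or Kruskal--Katona type inequality would serve the same purpose; plain supersaturation does not.
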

\begin{proof}
This is a corollary of a theorem of Bollob\'as \cite{Bela}, which we now recall. For $n\in\nat$ and $m\in\er$ let $k_s(n,m)=\min\{k_s(H)~:~|V(H)|=n\ \textrm{and}\ |E(H)|\geq m\}$. By putting $p=2$, $r=t$, $q=t-1$ and $x=m$ in {\cite[Corollary 2]{Bela}} we obtain the inequality
\begin{equation}
\label{syfek}
k_{t+1}(n,m)\geq \frac{1}{(t+1)^t}(2tmn^{t-1}-(t-1)n^{t+1})\qquad \mathrm{for}\quad \frac{t-1}{2t}n^2\leq m\leq\frac{t}{2(t+1)}n^2
\end{equation}

Now note that it suffices to prove the theorem for $n\geq n_0=n_0(t,C)$. Small values of $n$ can be treated by later increasing $C'$. Define $C'=\frac{C(t+1)^t}{2t}$ and suppose that $n$ is large enough so that $\frac{t-1}{2t}n^2+C'n\leq\frac{t}{2(t+1)}n^2$. Then by \eqref{syfek} a graph $G$ with $|E(G)|\geq\frac{t-1}{2t}n^2+C'n$ satisfies
$$k_{t+1}(G)\geq \frac{2C'tn^{t}}{(t+1)^t}=Cn^t.$$
\end{proof}

\begin{remark}
\label{remark:reiher}
An asymptotically exact value of $k_s(n,m)$ can be found in the deep result of \cite{Reiher}, but its application would not lead to any substantial improvement of the bounds we get, other than tweaking some constants which are anyhow far from optimal, for instance in Theorem~\ref{thm:anydim}.
\end{remark}

\begin{remark}
\label{remark:cliquecounting}
Theorem~\ref{thm:bolo} immediately implies that solely due to clique counting, and without using the assumption that $G$ is $(2s-1)$-leveled, the upper bound in Theorem~\ref{thm:dlevelbound} holds with an accuracy of $O(n)$ edges. The exact linear term can be found thanks to the combinatorial properties of $G$.
\end{remark}

Next we quote the deep result of \cite{LS}, which is our main device for controlling the approximate structure of the graphs under consideration.

\begin{theorem}[Lov\'asz, Simonovits, {\cite[Theorem 2]{LS}}]
\label{thm:lovasz}
For every $s\geq 1$ and every constant $C>0$ there exist $\delta=\delta(s,C)$ and $C'=C'(s,C)$ with the following property. If $0<k<\delta n^2$ and $G$ is a graph with $n$ vertices for which
$$|E(G)|= \frac{s-1}{2s}n^2+k \quad\textrm{and}\quad k_{s+1}(G)\leq Ckn^{s-1}$$
then there are integers $n_1,\ldots,n_s$ such that 
$$\sum n_i=n\quad\textrm{and}\quad \frac{n}{s}-C'\sqrt{k}\leq n_i\leq \frac{n}{s}+ C'\sqrt{k}$$
and $G$ can be obtained from $K^s(n_1,\ldots,n_s)$ by changing (adding/removing) at most $C'k$ edges.
\end{theorem}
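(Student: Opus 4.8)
The plan is to derive this stability statement by combining a soft structural result with a quantitative counting bootstrap, organised as an induction on $s$. The two numerical hypotheses are tightly calibrated against one another: from $e(G)=\frac{s-1}{2s}n^2+k$ and the estimate \eqref{syfek} behind Theorem~\ref{thm:bolo} one gets $k_{s+1}(G)\ge\frac{2s}{(s+1)^s}kn^{s-1}$, so the assumption $k_{s+1}(G)\le Ckn^{s-1}$ says precisely that $G$ carries, up to a constant factor, the fewest $(s+1)$-cliques that its edge count allows; and adding $k$ edges inside one part of the Tur\'an graph $K^s(n/s,\ldots,n/s)$ creates about $kn^{s-1}/s^{s-1}$ such cliques, which is exactly the family of configurations the conclusion describes. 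The proof must show that near-extremality for the clique count forces this structure.

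\textbf{Steps 1--2.} I would first apply the Erd\H{o}s--Simonovits stability form of Tur\'an's theorem: since $e(G)>\frac{s-1}{2s}n^2$ but $k_{s+1}(G)\le C\delta n^{s+1}=o(n^{s+1})$, there is a partition $V(G)=V_1\sqcup\cdots\sqcup V_s$, chosen to minimise the number $b$ of edges inside the parts, with $b=o(n^2)$; writing $g$ for the number of non-edges between parts one has $g\le b-k$ automatically from the edge count, and all but $o(n)$ vertices have degree $(1-o(1))\frac{s-1}{s}n$ and miss only $o(n)$ of their between-part edges. Now bootstrap. Every $K_{s+1}$ meets some $V_i$ in $\ge 2$ vertices and hence contains an inside-part edge, whereas an inside-part edge $uv\subset V_i$ with both endpoints non-exceptional extends to $\Omega(n^{s-1})$ copies of $K_{s+1}$, since a positive fraction of the transversals $(w_j\in V_j)_{j\ne i}$ avoids the $o(n)$ defects at $u$, at $v$ and among the $w_j$. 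Summing, $(\#\,\text{non-exceptional inside-part edges})\cdot\Omega(n^{s-1})\le(s+1)\,k_{s+1}(G)\le Ckn^{s-1}$, so there are $O(k)$ of them, while the $o(n)$ exceptional vertices support only $o(n^2)$ more; reinserting this sharper bound into the estimate (a progressive induction on $k$) brings $b$ down to $O(k)$, whence $g\le b-k=O(k)$. Equivalently the bootstrap can be organised as an induction on $s$: choose $v$ of near-maximum degree with few $K_{s+1}$'s through it, pass to $H=G[N_G(v)]$, check that $|V(H)|\approx\frac{s-1}{s}n$, that its non-exceptional vertices have degree $\gtrsim\frac{s-2}{s-1}|V(H)|$ and that $k_s(H)\le C''k'|V(H)|^{s-2}$ for the appropriate deficit $k'$, apply the theorem at level $s-1$ to $H$, and glue the resulting near-$(s-1)$-partition of $N_G(v)$ to a near-$s$-partition of $G$.

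\textbf{Step 3 (balance).} Once $G$ is $K^s(n_1,\ldots,n_s)$ modified by $\ell=b+g=O(k)$ edges, the identity $\sum_{i<j}n_in_j=\frac{s-1}{2s}n^2-\frac12\sum_i(n_i-n/s)^2$ gives
\[
\tfrac{s-1}{2s}n^2+k=e(G)=\sum_{i<j}n_in_j-g+b\le\tfrac{s-1}{2s}n^2-\tfrac12\sum_i\bigl(n_i-\tfrac{n}{s}\bigr)^2+\ell,
\]
so $\sum_i(n_i-n/s)^2\le 2(\ell-k)=O(k)$ and hence $|n_i-n/s|\le C'\sqrt{k}$ for all $i$. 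Together with Steps~1--2 this yields every assertion, with $\delta=\delta(s,C)$ and $C'=C'(s,C)$.

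The crux is the bootstrap in Step~2: upgrading the $o(n^2)$-stability of Step~1 to the sharp $O(k)$ bound. An inside-part edge fails to generate $\Omega(n^{s-1})$ copies of $K_{s+1}$ only when one endpoint is exceptional --- of low degree, or missing many between-part edges --- so one has to prove that exceptional vertices are rare and cannot carry many inside-part edges without themselves forcing extra $(s+1)$-cliques, all while keeping every error term linear in $k$ rather than merely $o(n^2)$ and controlling the constants through the induction on $s$. That delicate bookkeeping is exactly the work done by Lov\'asz and Simonovits.
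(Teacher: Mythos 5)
The paper does not prove this statement at all: it is quoted verbatim as Theorem~2 of Lov\'asz and Simonovits \cite{LS} and used as a black box, so there is no in-paper argument to compare yours against. Judged on its own terms, your proposal is a reasonable reconstruction of the stability-method proof, and two of its three steps are essentially sound: the calibration of the two hypotheses against \eqref{syfek} at the start, and the balance computation in Step~3 (the identity $\sum_{i<j}n_in_j=\frac{s-1}{2s}n^2-\frac12\sum_i(n_i-n/s)^2$ together with $b-g\geq k$ does force $|n_i-n/s|=O(\sqrt{k})$ once $b+g=O(k)$ is established).

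The genuine gap is Step~2, and you concede as much in your closing paragraph. The supersaturation count only controls inside-part edges whose \emph{both} endpoints are non-exceptional; the exceptional vertices --- those missing $\Omega(n)$ of their cross-part neighbours --- number $o(n)$ and can a priori carry $o(n^2)$ inside-part edges, which is far larger than $O(k)$ in the regime that actually matters downstream (in Corollary~\ref{corollary:main} one has $k=O(n)$). ``Reinserting this sharper bound \ldots a progressive induction on $k$'' is not a defined procedure: you do not state the induction hypothesis, you do not show that an exceptional vertex either generates extra $(s+1)$-cliques or forces a compensating deficit in $e(G)$, and in the alternative route (passing to $H=G[N_G(v)]$ and inducting on $s$) you do not verify that the deficit parameter $k'$ and the clique bound for $H$ come out in the form required by the inductive statement, nor how the constants $\delta$ and $C'$ propagate. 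This bookkeeping is exactly the content of the Lov\'asz--Simonovits argument, so what you have is a correct plan rather than a proof. The two honest options are to cite \cite{LS} as the paper does, or to carry out Step~2 in full with explicit control of the exceptional set.
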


From this we get the following corollary, which adjusts the approximate structure so that it fits the framework of Definition~\ref{def:type}.

\begin{corollary}
\label{corollary:main}
For every $s\geq 1$, $\eta>0$, $\alpha>0$ and $C>0$ there exist $n_0=n_0(s,\eta,\alpha,C)$ and $C'=C'(s,\eta,\alpha,C)$ such that every graph $G$ with $n\geq n_0$ vertices for which
$$|E(G)|>\frac{s-1}{2s}n^2+n \quad\textrm{and}\quad k_{s+1}(G)\leq Cn^{s}$$
is $(n,\alpha)$-flat of type $(s,\eta,C')$.
\end{corollary}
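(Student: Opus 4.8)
The plan is to feed $G$ into Theorem~\ref{thm:lovasz} and then repackage its conclusion in the language of Definition~\ref{def:type}. Put $k=|E(G)|-\frac{s-1}{2s}n^2$, so the hypothesis on $|E(G)|$ becomes $k>n>0$. The decisive preliminary step is to show that $k$ is only \emph{linear} in $n$. To see this, apply Theorem~\ref{thm:bolo} with $t=s$ and with the role of its constant played by $C+1$; this produces a constant $C_1=C_1(s,C)$ such that any $n$-vertex graph with at least $\frac{s-1}{2s}n^2+C_1n$ edges has $k_{s+1}\ge(C+1)n^s$. Since $k_{s+1}(G)\le Cn^s<(C+1)n^s$, the contrapositive forces $|E(G)|<\frac{s-1}{2s}n^2+C_1n$, i.e.\ $k<C_1n$. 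I expect this to be where the real content lies: without a linear bound on $k$, the $\Theta(k)$ edge modifications supplied by Theorem~\ref{thm:lovasz} could be of order $n^2$, far too many to absorb into a set $X$ of bounded size.

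Granting $n<k<C_1n$, I next verify the hypotheses of Theorem~\ref{thm:lovasz} with the role of its constant played by $C$: the inequality $k_{s+1}(G)\le Ckn^{s-1}$ holds because $k>n$, and $k<\delta n^2$ holds as soon as $n\ge n_0$ (using $k<C_1n$). Theorem~\ref{thm:lovasz} then returns integers $p_1,\dots,p_s$ with $\sum p_i=n$ and $|p_i-n/s|\le A\sqrt k$, together with a set $F$ of at most $Ak$ edges such that $G$ arises from $K:=K^s(p_1,\dots,p_s)$ by adding or removing exactly the edges of $F$; here $A=A(s,C)$. From $k<C_1n$ we get $|F|\le AC_1n$ and $|p_i-n/s|\le A\sqrt{C_1n}$, so each part $P_i$ of $K$ satisfies $|P_i|\ge\tfrac{n}{2s}$ once $n$ is large, and $P_1,\dots,P_s$ partition $V(G)$.

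It remains to carve out the exceptional set. Call a vertex $v\in P_i$ \emph{bad} if $\deg_G(v,P_j)<(1-\tfrac{\eta}{2})|P_j|$ for some $j\ne i$, and let $X$ be the set of bad vertices, $S_i=P_i\setminus X$. Since every edge of $K$ joins two distinct parts, all edges of $K$ absent from $G$ lie in $F$; a bad vertex is incident to more than $\tfrac{\eta}{2}|P_j|\ge\tfrac{\eta n}{4s}$ of them, so double counting the at most $|F|\le AC_1n$ such edges gives $|X|\le 8AC_1s/\eta=:C'$, a constant depending only on $s,\eta,C$, and in particular $|X|\le C'$. For $v\in S_i$ and $j\ne i$ we then have $\deg_G(v,S_j)\ge\deg_G(v,P_j)-|X|\ge(1-\tfrac{\eta}{2})|S_j|-C'$, which is at least $(1-\eta)|S_j|$ because $|S_j|\ge\tfrac{n}{2s}-C'\ge 2C'/\eta$ for $n$ large; thus $V(G)=S_1\sqcup\cdots\sqcup S_s\sqcup X$ has type $(s,\eta,C')$. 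Finally $|S_i|$ differs from $n/s$ by at most $A\sqrt{C_1n}+C'$, which is at most $\alpha n/s$ once $n$ is large, so the partition is $(n,\alpha)$-flat as well. Setting $n_0=n_0(s,\eta,\alpha,C)$ above the finitely many thresholds gathered along the way completes the proof; we may of course assume $\eta\le1$ and $\alpha\le1$, as the conclusion only weakens for larger values.
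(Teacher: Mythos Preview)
Your proof is correct and follows essentially the same route as the paper: bound $k$ linearly via Theorem~\ref{thm:bolo}, feed $G$ into Theorem~\ref{thm:lovasz}, and then separate the vertices of deficient cross-degree into a constant-size exceptional set $X$, verifying the type and flatness conditions afterwards. The only cosmetic difference is that the paper thresholds bad vertices against $\tfrac{n}{s}(1-\tfrac{\eta}{2})$ rather than $(1-\tfrac{\eta}{2})|P_j|$, which is immaterial since $|P_j|=\tfrac{n}{s}+O(\sqrt{n})$.
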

\begin{proof}
Suppose $n$ is sufficiently large and let $|E(G)|=\frac{s-1}{2s}n^2+k$. By Theorem~\ref{thm:bolo} we get $n\leq k\leq C_1n$ for some constant $C_1=C_1(s,C)$. Moreover, we have
$$k_{s+1}(G)\leq Cn^s\leq Ckn^{s-1}$$
so for sufficiently large $n$ the graph $G$ satisfies the assumptions of Theorem~\ref{thm:lovasz}. We obtain a decomposition $V(G)=A_1\sqcup \cdots\sqcup A_s$ which satisfies, for a suitable constant $C_2=C_2(s,C_1)$, the conditions
\begin{eqnarray}
\label{eq:loc1} \frac{n}{s}-C_2\sqrt{n}\leq|A_i|\leq\frac{n}{s}+C_2\sqrt{n} & \textrm{for all}\ i,\\
\label{eq:loc2} |\overline{E}(G[A_i,A_j])|\leq C_2n & \textrm{for all}\ i,j,
\end{eqnarray}
where $|\overline{E}(G[A_i,A_j])|$ is the number of edges missing between the parts $A_i$ and $A_j$ in $G$.

For $i\neq j$ define
$$X_{i,j}=\{v\in A_i~:~\deg(v,A_j)\leq \frac{n}{s}(1-\frac12\eta)\}.$$
Note that by \eqref{eq:loc1} for large $n$ we have
$$|\overline{E}(G[A_i,A_j])|\geq |X_{i,j}|(|A_j|-\frac{n}{s}(1-\frac12\eta))\geq|X_{i,j}|\cdot\frac{\eta n}{3s}$$
so \eqref{eq:loc2} gives $|X_{i,j}|\leq C_3$ for $C_3=3sC_2\eta^{-1}$. Set $X=\bigcup_{i,j}X_{i,j}$, $C'=s^2C_3$ and $S_i=A_i\setminus X$. We clearly have $|X|\leq C'$. For every $i$, every $v\in S_i$ and all $j\neq i$ we get
$$|S_i|\geq |A_i|-C'\geq \frac{n}{s}-C_2\sqrt{n}-C'\geq \frac{n}{s}(1-\alpha),$$
$$|S_i|\leq |A_i|\leq \frac{n}{s}+C_2\sqrt{n}\leq \frac{n}{s}(1+\alpha),$$
$$\deg(v,S_j)\geq\deg(v,A_j)-C'\geq\frac{n}{s}(1-\frac12\eta)-C'\geq(\frac{n}{s}+C_2\sqrt{n})(1-\eta) \geq|S_j|(1-\eta).$$
It follows that the decomposition $V(G)=S_1\sqcup\cdots\sqcup S_s\sqcup X$ makes $G$ an $(n,\alpha)$-flat graph of type $(s,\eta,C')$.
\end{proof}

For convenience we collect some facts about graphs of type $(t,\eta,C)$, all of which follow easily from the definition. In the next section we are going to use them freely without explicit reference.
\begin{fact}
\label{fact}
Suppose $G$ is a graph of type $(t,\eta,C)$ with a decomposition $V(G)=S_1\sqcup\cdots\sqcup S_t\sqcup X$.
\begin{itemize}
\item[a)] If $0<\beta\leq 1$ and $T_i\subset S_i$, $X'\subset X$ with $|T_i|\geq \beta|S_i|$ for all $i$ then the graph $G[\bigcup T_i\cup X']$ is of type $(t, \eta\beta^{-1}, C)$.

\item[b)] If $1\leq k\leq t-1$ and $P\subset \bigcup_{i=1}^kS_i$ then 
$$|S_{k+1}\cap\bigcap_{v\in P}N_G(v)|\geq (1-\eta|P|)|S_{k+1}|.$$

\item[c)] If $\eta<\frac{1}{t}$ then $G[\bigcup S_i]$ has a clique $\sigma$ with $|\sigma\cap S_i|=1$ for all $i$.

\item[d)] If $1\leq k\leq t-1$ and $\sigma\subset \bigcup_{i=1}^kS_i$ is a maximal clique in $G[\bigcup_{i=1}^kS_i]$ then $\lk_G\sigma$ is a graph of type $(t-k,\ \eta(1-\eta|\sigma|)^{-1},\ C)$, which is $((1-\eta|\sigma|)m)$-large if $G$ was $m$-large.
\end{itemize}
\end{fact}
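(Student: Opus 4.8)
The plan is to verify parts a)--d) one at a time, each directly from Definition~\ref{def:type} by an elementary count of missing neighbours; nothing beyond the definition is needed, and the only point to watch is that the relevant parts stay non-empty. I would start with part b), which is the numerical core used in c) and d). Given $P\subset\bigcup_{i\le k}S_i$, each $v\in P$ lies in some $S_i$ with $i\le k$, hence $i\ne k+1$, so by hypothesis $|S_{k+1}\setminus N(v)|\le\eta|S_{k+1}|$; since $S_{k+1}\setminus\bigcap_{v\in P}N(v)=\bigcup_{v\in P}\bigl(S_{k+1}\setminus N(v)\bigr)$, a union bound gives $|S_{k+1}\setminus\bigcap_{v\in P}N(v)|\le\eta|P|\,|S_{k+1}|$, which is exactly the claimed inequality. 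The same argument bounds $|S_i\cap\bigcap_{v\in P}N(v)|$ from below by $(1-\eta|P|)|S_i|$ for \emph{any} $i>k$, and this slightly more symmetric form is what I will use in d).

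For part a), take the obvious partition $V(G[\bigcup T_i\cup X'])=T_1\sqcup\cdots\sqcup T_t\sqcup X'$. For $v\in T_i$ and $j\ne i$,
$$\deg(v,T_j)\ge\deg(v,S_j)-|S_j\setminus T_j|\ge|S_j|(1-\eta)-(|S_j|-|T_j|)=|T_j|-\eta|S_j|\ge|T_j|(1-\eta\beta^{-1}),$$
where the last step is just $|T_j|\ge\beta|S_j|$; and $|X'|\le|X|\le C$. Part c) is then a greedy construction: choose $v_1\in S_1$ (possible as $|S_1|\ge1$) and, having chosen a clique $\{v_1,\dots,v_k\}$ with $v_i\in S_i$, apply b) with $P=\{v_1,\dots,v_k\}$ to see that $S_{k+1}\cap\bigcap_{i\le k}N(v_i)$ has size at least $(1-\eta k)|S_{k+1}|\ge(1-\eta(t-1))|S_{k+1}|>0$, using $\eta<\frac{1}{t}$ and $|S_{k+1}|\ge1$; pick $v_{k+1}$ there and iterate up to $i=t$.

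Part d) requires a touch more care, so I expect it to be the (mild) main obstacle. The first step is to locate $V(\lk_G\sigma)=\bigcap_{v\in\sigma}N(v)$: because $\sigma$ is a \emph{maximal} clique of $G[\bigcup_{i\le k}S_i]$, no vertex of $\bigcup_{i\le k}S_i$ outside $\sigma$ is adjacent to all of $\sigma$, so $V(\lk_G\sigma)\subset S_{k+1}\sqcup\cdots\sqcup S_t\sqcup X$. Put $S_i'=S_i\cap\bigcap_{v\in\sigma}N(v)$ for $i>k$ and $X'=X\cap\bigcap_{v\in\sigma}N(v)$; this partitions $V(\lk_G\sigma)$, with $|X'|\le C$, and by b) (in the symmetric form noted above) $|S_i'|\ge(1-\eta|\sigma|)|S_i|$. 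This last bound immediately gives the $m$-largeness claim, and it supplies the ratio $\beta=1-\eta|\sigma|$ needed to rerun the estimate of a): for $v\in S_i'$ and $j>k$, $j\ne i$, adjacency in $\lk_G\sigma$ is inherited from $G$, so
$$\deg(v,S_j')=|S_j'\cap N(v)|\ge\deg(v,S_j)-(|S_j|-|S_j'|)\ge|S_j'|-\eta|S_j|\ge|S_j'|\bigl(1-\eta(1-\eta|\sigma|)^{-1}\bigr),$$
the last inequality using $|S_j'|\ge(1-\eta|\sigma|)|S_j|$. This is precisely the defining condition for $\lk_G\sigma$ to have type $(t-k,\ \eta(1-\eta|\sigma|)^{-1},\ C)$, which completes the proof. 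Throughout c) and d) the only implicit hypothesis is non-emptiness of the parts (equivalently $\eta<\frac{1}{t}$, resp. $1-\eta|\sigma|>0$), and this is automatic for the flat/large partitions produced by Corollary~\ref{corollary:main}, which is the only context in which these facts are invoked.
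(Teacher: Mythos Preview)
Your proof is correct and is precisely the natural elaboration of what the paper intends: the paper itself offers no proof of Fact~\ref{fact}, only the remark that ``all of which follow easily from the definition'', and your verification of a)--d) via missing-neighbour counts and the union bound is exactly the direct argument the author has in mind. Your added comment about the implicit non-emptiness hypotheses (equivalently $\eta<\tfrac{1}{t}$ in c) and $1-\eta|\sigma|>0$ in d)) is a useful observation; as you note, these are automatic in the $(n,\alpha)$-flat or $m$-large settings produced by Corollary~\ref{corollary:main}, which is the only context in which the paper invokes the fact.
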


\section{Dense $d$-leveled graphs}
\label{section:dlevel}

In this section we study graphs which are simultaneously of type $(t,\eta,C)$ and $d$-leveled for some $d$. The main technical result of this section is Theorem~\ref{thm:denseshit}, which gives an upper bound on the number of edges in such graphs when $d=2t-1$. Theorem~\ref{thm:dlevelbound} then follows easily, and it is proved at the end of this section. First we need two auxiliary results.

\begin{lemma}
\label{lem:indep}
Suppose that $d\geq 0$ and $G$ is a $d$-leveled graph with $V(G)=I\sqcup X$ where $I$ is an independent set in $G$. Then 
$$|I|\leq 2|X|^d.$$
\end{lemma}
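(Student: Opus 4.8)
The plan is a direct double-counting argument, exploiting purity of $G$ together with the fact that in a $d$-leveled graph the link of a clique of size $d$ consists of exactly two vertices. I would not use induction on $d$.

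First I would note that, being $d$-leveled, $G$ is pure of dimension $d$: every maximal clique has size $d+1$, and every vertex lies in one. So for each $v\in I$ I fix a maximal clique $\sigma_v\ni v$, with $|\sigma_v|=d+1$. Since $I$ is independent, $\sigma_v$ cannot contain two vertices of $I$, whence $\sigma_v\cap I=\{v\}$; therefore $\tau_v:=\sigma_v\setminus\{v\}$ is a clique of size $d$ with $\tau_v\subset X$.

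Next, because $\tau_v$ is a $d$-clique and $G$ is $d$-leveled, $\lk_G\tau_v$ consists of exactly two isolated vertices, and $v\in\lk_G\tau_v$ since $\tau_v\cup\{v\}=\sigma_v$ is a clique. Now consider the map $v\mapsto\tau_v$ from $I$ to the set $\mathcal{C}$ of $d$-element cliques of the induced subgraph $G[X]$. The fibre over any $\tau\in\mathcal{C}$ is contained in $\lk_G\tau$, which has at most two vertices, so every fibre has size at most $2$. Consequently $|I|\le 2|\mathcal{C}|\le 2\binom{|X|}{d}\le 2|X|^d$, as claimed.

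I do not anticipate a genuine obstacle here; the one idea that must be got right is that independence of $I$ forces every maximal clique to absorb at most one element of $I$, which reduces the estimate to counting $d$-cliques inside the small set $X$. The only slightly degenerate case is $d=0$ (where $G=S^0$, the empty clique is the unique $d$-clique contained in $X$, and $\lk_G\emptyset=G$), but there $|I|\le 2$ holds trivially and matches the bound.
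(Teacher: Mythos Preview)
Your proof is correct and follows essentially the same double-counting argument as the paper's own proof: assign to each $v\in I$ a $d$-clique $\tau_v\subset X$ obtained by removing $v$ from a maximal clique through $v$, observe that each $d$-clique in $X$ can arise for at most two vertices of $I$, and bound the number of such $d$-cliques by ${|X|\choose d}\leq |X|^d$. Your write-up simply makes the steps (purity, independence forcing $\sigma_v\cap I=\{v\}$, the fibre bound via $\lk_G\tau_v$) more explicit.
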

\begin{proof}
Every vertex $v\in I$ forms a $(d+1)$-clique with at least one $d$-element subset $\sigma$ of $X$. On the other hand, every $d$-element clique $\sigma\subset X$ belongs to at most two such $(d+1)$-cliques. It follows that
$|I|\leq 2{|X|\choose d}\leq 2|X|^d$ as required.
\end{proof}

\begin{notdef}
For the remainder of the paper we fix small constants $\eta_t, \alpha_t$ for $t=1,2\ldots$ subject to the conditions
$$\eta_t<\frac{1}{100t}\ \textrm{for all}\ t\geq 1,\quad \eta_t(1-2t\eta_t)^{-1}<\eta_{t-1}\ \textrm{for all}\  t\geq 2$$
and $\alpha_t<\frac{\eta_t}{10t}$.
\end{notdef}

\begin{proposition}
\label{prop:noshit}
Let $t\geq 1$. For every $C\geq 0$ there is an $m=m(t,C)$ with the following property. If $G$ is an $m$-large graph of type $(t,\eta_t,C)$ then $G$ is not $d$-leveled for any $d\leq 2t-2$.
\end{proposition}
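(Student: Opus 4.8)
The plan is to argue by induction on $t$, reducing $t$ by passing to the link of a well-chosen clique and invoking the inductive hypothesis. So fix the decomposition $V(G)=S_1\sqcup\cdots\sqcup S_t\sqcup X$ witnessing that $G$ is of type $(t,\eta_t,C)$, and suppose for contradiction that $G$ is $d$-leveled for some $d$ with $0\le d\le 2t-2$. The first thing I would record is that, since every maximal clique of $G$ has size $d+1$, every clique of $G$ has at most $2t-1$ vertices; this crude bound is what keeps the ``type parameter'' from blowing up when links are taken. The base case $t=1$ needs no work: a $0$-leveled graph has exactly two vertices while $|V(G)|\ge|S_1|\ge m$, so $m=3$ suffices.

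For the inductive step ($t\ge 2$, proposition assumed for $t-1$ with threshold $m(t-1,C)$) I would first show that every $S_i$ is an independent set. If $G[S_1]$, say, contained an edge, extend it to a clique $\rho$ maximal in $G[S_1]$, with $2\le|\rho|=r\le 2t-1$, so $\eta_t r<\tfrac1{50}$. By Fact~\ref{fact}(b) the clique $\rho$ has a common neighbour in $S_2$, hence is not maximal in $G$, which forces $d-r\ge 0$. By Fact~\ref{fact}(d), $\lk_G\rho$ has type $(t-1,\eta_t(1-\eta_t r)^{-1},C)$ and is $((1-\eta_t r)m)$-large; because $r<2t$, the defining inequality $\eta_t(1-2t\eta_t)^{-1}<\eta_{t-1}$ of the fixed constants upgrades this to type $(t-1,\eta_{t-1},C)$, while $(1-\eta_t r)m>\tfrac{49}{50}m$. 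Choosing $m\ge\tfrac{50}{49}m(t-1,C)$, the inductive hypothesis says $\lk_G\rho$ is not $d'$-leveled for $d'\le 2(t-1)-2=2t-4$; but $\lk_G\rho$ is $(d-r)$-leveled with $0\le d-r\le(2t-2)-2=2t-4$, a contradiction. So every $S_i$ is independent.

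Now the end is in sight. Using $\eta_t<\tfrac1t$ and Fact~\ref{fact}(c), pick a clique $\sigma$ with one vertex in each part; then $|\sigma|=t\le\omega(G)=d+1$, so $d\ge t-1$. Let $\rho$ consist of the vertices of $\sigma$ in $S_1,\dots,S_{t-1}$ (so $|\rho|=t-1$) and put $L=\lk_G\rho$, which is $(d-t+1)$-leveled with $0\le d-t+1\le t-1$. Every vertex of $L$ is adjacent to the representative of $\sigma$ in each $S_j$ with $j\le t-1$, and those parts are independent, so $V(L)\subseteq S_t\cup X$; write $V(L)=I\sqcup X'$ with $I\subseteq S_t$ independent and $|X'|\le C$. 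Fact~\ref{fact}(b) gives $|I|\ge(1-(t-1)\eta_t)|S_t|>\tfrac{99}{100}m$, whereas Lemma~\ref{lem:indep} applied to $L$ gives $|I|\le 2|X'|^{\,d-t+1}\le 2(C+1)^{t-1}$. For $m$ larger than a constant depending only on $t$ and $C$ these are incompatible, which completes the induction.

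I expect the only real friction to be the bookkeeping in the inductive step: keeping the type parameter below $\eta_{t-1}$ and the link large enough as $t$ drops. That bookkeeping is exactly what the constants $\eta_t$ were pre-tuned for, provided one feeds in the clique-size bound $2t-1$ coming from the $d$-leveled hypothesis; the inequality $\eta_t(1-2t\eta_t)^{-1}<\eta_{t-1}$ is tailored to this use. The conceptually load-bearing observation is simply that once every $S_i$ is independent, $G$ sits inside an almost-complete $t$-partite graph plus the $O(1)$-sized set $X$, so some link has a gigantic independent part flanked by only boundedly many other vertices --- which is precisely what Lemma~\ref{lem:indep} rules out.
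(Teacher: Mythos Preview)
Your proof is correct and follows essentially the same route as the paper's: induction on $t$, first ruling out edges inside any $S_i$ by passing to the link of a maximal clique there and invoking the inductive hypothesis (with the type parameter controlled via $\eta_t(1-2t\eta_t)^{-1}<\eta_{t-1}$), and then, once all $S_i$ are independent, taking the link of a $(t-1)$-clique transverse to $S_1,\dots,S_{t-1}$ to force a large independent set against a bounded remainder, contradicting Lemma~\ref{lem:indep}. Your write-up is in fact slightly more explicit than the paper's in a couple of places (e.g.\ justifying $d-r\ge 0$ and why $V(L)\subseteq S_t\cup X$).
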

\begin{proof}
We prove the statement by induction on $t$. If $t=1$ and $m=3$ then $G$ cannot be $0$-leveled, so we are done. 

Let $t\geq 2$ and assume that $G$ is $m$-large of type $(t,\eta_t,C)$ for sufficiently large $m$. Suppose, contrary to what we need to show, that $G$ is $d$-leveled for some $d\leq 2t-2$. By Fact~\ref{fact} $G$ has cliques of size $t$ so $d+1\geq t$.

If some $G[S_i]$, say $G[S_1]$, contains a maximal clique $\sigma$ of size $c\geq 2$ then we have $2\leq c\leq d+1\leq 2t-1$. The link $\lk_G\sigma$ is a $(d-c)$-leveled graph of type $(t-1, \eta_{t}(1-2t\eta_t)^{-1} ,C)$, hence also of type $(t-1,\eta_{t-1},C)$, and it is $(1-2t\eta_t)m$-large. For large enough $m$ induction yields $d-c\geq 2(t-1)-1$ i.e. $d\geq 2t+c-3\geq 2t-1$, contrary to our hypothesis.

It means that each $G[S_i]$ is edge-less. In this case choose a clique $\sigma$ of size $t-1$ containing exactly one vertex from each of $S_1,\ldots,S_{t-1}$. The link $\lk_G\sigma$ is a $(d-t+1)$-leveled graph contained in $S_t\sqcup X$, and the part contained in $S_t$ is an independent set of size at least $(1-t\eta_t)m$.  Lemma~\ref{lem:indep} gives
$$(1-t\eta_t)m\leq |V(\lk_G\sigma)\cap S_t|\leq 2|X|^d\leq 2C^d$$
which is a contradiction for sufficiently large $m$.
\end{proof}

Now we come to the main result which bounds the number of edges among those $(2t-1)$-leveled graphs which are  similar to a join of $t$ almost equal parts in the sense of Definition~\ref{def:type}. 

\begin{theorem}
\label{thm:denseshit}
Let $t\geq 1$. For every $C\geq 0$ there is an $n_0=n_0(t,C)$ with the following property. Suppose $G$ is an $(n,\alpha_t)$-flat graph of type $(t,\frac{\eta_t}{12},C)$ with $n\geq n_0$ vertices. If $G$ is $(2t-1)$-leveled then
\begin{equation*}
\label{eq:thmdenseshit}
|E(G)|\leq \frac{t-1}{2t}n^2+n.
\end{equation*}
\end{theorem}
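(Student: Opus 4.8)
The proof goes by induction on $t$, with the base case $t=1$ being essentially trivial (a 1-leveled graph is a disjoint union of cycles, and an $(n,\alpha_1)$-flat graph of type $(1,\ldots)$ on one part is just a single cycle-union on $n$ vertices, which has exactly $n$ edges). For the inductive step, fix the decomposition $V(G)=S_1\sqcup\cdots\sqcup S_t\sqcup X$ witnessing the type. The strategy is to bound $|E(G)|$ by splitting the edge count into three pieces: (i) edges inside the parts $\bigcup_i G[S_i]$; (ii) edges between distinct parts $S_i,S_j$; and (iii) edges incident to $X$. Piece (iii) is $O(n)$ since $|X|\le C$. Piece (ii) is at most $\binom{t}{2}\max_i|S_i|^2 \le \binom{t}{2}(\frac{n}{t}(1+\alpha_t))^2$, which after using $\alpha_t$ small is at most $\frac{t-1}{2t}n^2 + O(n)$ — but this alone only gives the bound with an $O(n)$ error, so the real work is to show that the internal edges of the $G[S_i]$ and the "surplus" hidden in the $\alpha_t$-slack and in $X$ together cannot push us past the claimed $+n$. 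That is where being $(2t-1)$-leveled is exploited.

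\textbf{Exploiting the leveled structure.} The key observation is that if some $G[S_i]$ contains an edge, then (as in the proof of Proposition~\ref{prop:noshit}) we can take a maximal clique $\sigma$ in that $G[S_i]$ of size $c\ge 2$; its link $\lk_G\sigma$ is a $(2t-1-c)$-leveled graph which, by Fact~\ref{fact}(d), is of type $(t-1,\eta_{t-1}/\text{something},C)$ and is $\Omega(n)$-large. If $c\ge 3$ then $2t-1-c \le 2t-4 < 2(t-1)-1$, contradicting Proposition~\ref{prop:noshit}; so $c$ can only be $2$, i.e. the only cliques living inside a single $S_i$ are single edges, and moreover the link of such an edge is a $(2t-3)$-leveled graph of type $(t-1,\ldots)$ to which \emph{induction} (Theorem~\ref{thm:denseshit} for $t-1$) applies. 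This lets one carry out a local exchange/counting argument: an edge inside $S_i$ "costs" us an extra edge, but it forces the structure of its link, and balancing the resulting contributions against the join bound shows no net gain is possible. The cleanest route is probably to argue that $G$ must in fact be \emph{close to} the join $G[S_1]\ast\cdots\ast G[S_t]$ with each $G[S_i]$ a disjoint union of cycles (that is the true extremal configuration), and to set up the inequality so that any deviation — a missing cross-edge, an extra vertex in $X$, an imbalance in the $|S_i|$, or an internal non-cycle edge — strictly decreases the count or is absorbed by the $+n$ term.

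\textbf{The main obstacle.} The delicate point is the bookkeeping of the linear ($O(n)$) term: supersaturation and Lov\'asz--Simonovits only give the quadratic main term plus $O(n)$ slack, and the whole content of the theorem is pinning down that the slack is exactly $+n$ and not, say, $+2n$. Concretely, one has to show that edges inside the $S_i$, edges to $X$, and the slack from $|S_i|$ possibly exceeding $\frac{n}{t}$ cannot \emph{simultaneously} conspire to beat the balanced join of cycles. I expect the argument to proceed by first showing each $G[S_i]$ has maximum degree $\le 2$ (hence is a disjoint union of paths and cycles, contributing at most $|S_i|$ internal edges), then handling $X$ by noting each $x\in X$ together with the leveled property forces $\deg(x)$ to be controlled (via Lemma~\ref{lem:indep}-type reasoning on links), and finally optimizing the resulting expression $\sum_i |S_i| + \sum_{i<j}|S_i||S_j| + O(1)$ subject to $\sum|S_i| \le n$, where the quadratic part is maximized at the balanced point and equals exactly $\frac{t-1}{2t}n^2$, leaving the linear $\sum_i|S_i| \le n$ as the $+n$. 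Making every one of those "forces" rigorous — especially ruling out that a vertex of $X$ has many neighbors, and ruling out long paths (as opposed to cycles) inside an $S_i$ that would leave room for extra cross-edges — is the technical heart of the matter.
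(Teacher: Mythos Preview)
Your high-level outline matches the paper's: show each $G[S_i]$ is triangle-free via Proposition~\ref{prop:noshit}, upgrade this to maximum degree $2$, bound the degree of each $v\in X$ into $\bigcup S_i$, and then count. But several of the steps you wave at are exactly where the work lies, and two of them are not addressed in your plan.

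First, you omit the \emph{preprocessing} of $X$. The paper begins by moving every $v\in X$ with $l(v)\le 1$ (where $l(v)$ counts the parts $S_i$ toward which $v$ has low degree) into an appropriate $S_i$; only after this can one assume every remaining $v\in X$ has low degree toward at least \emph{two} parts. Without that assumption the degree bound for $X$ (your ``forces $\deg(x)$ to be controlled'') simply fails: a vertex $v\in X$ with $l(v)=0$ behaves exactly like a vertex of some $S_i$ and can have $\deg(v,\bigcup S_i)\approx \tfrac{t-1}{t}n$, which is too large for your final tally. The paper's Claim~5 is a rather involved case analysis on how many parts $v$ sees thinly, and it needs this preprocessing plus repeated appeals to Proposition~\ref{prop:noshit}; ``Lemma~\ref{lem:indep}-type reasoning'' is not enough.

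Second, your route to ``max degree $\le 2$ in $G[S_i]$'' is incomplete. Knowing only that maximal cliques in $G[S_i]$ have size $\le 2$ (triangle-freeness) does not by itself rule out a vertex of degree $3$ in $G[S_i]$. The paper proves it by assuming $v$ has three neighbours $u_1,u_2,u_3$ in $S_1$ and then building a $(2t-1)$-clique $\sigma$ with $v,u_1,u_2,u_3$ all in $\lk_G\sigma$, contradicting $0$-leveledness. But to build $\sigma$ one needs, for each $j\ge 2$, an \emph{edge} inside the common neighbourhood $S_j\cap\bigcap_{x}N(x)$; this is Claim~3, which in turn rests on Claim~2 (existence of a $2t$-clique in $G[\bigcup S_i]$). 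You do not mention either.

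Finally, the induction you propose on Theorem~\ref{thm:denseshit} itself --- applying the $t-1$ case to the link of an internal edge --- is not how the paper proceeds and it is unclear what it buys: bounding $|E(\lk_G\sigma)|$ does not directly bound internal edges of $G[S_i]$ or cross-edges. The paper is not inductive on this theorem; all the structural constraints come from Proposition~\ref{prop:noshit} (which \emph{is} inductive) applied to various links. Your remark about ``ruling out long paths (as opposed to cycles)'' is a red herring: once max degree $\le 2$ is known, $|E(G[S_i])|\le|S_i|$ regardless of whether the components are paths or cycles, and that is all the final count in \eqref{eqn:totale} needs.
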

\begin{proof}

If $t=1$ the result is obvious, so suppose $t\geq 2$.

For $v\in X$ let $l(v)=|\{i~:~\deg(v,S_i)\leq \frac{n}{t}(1-\frac{\eta_t}{12})\}|$. If $l(v)=0$ then we move $v$ to any $S_i$ and if $l(v)=1$ then we move $v$ to the only set $S_i$ for which $\deg(v,S_i)\leq \frac{n}{t}(1-\frac{\eta_t}{12})$. We iterate this operation as long as possible. Since we moved only at most $C$ vertices, we can assume that for large enough $n$ with the new partition $G$ is
$$(n,2\alpha_t)\textrm{-flat of type}\ (t,\frac{\eta_t}{6},C)$$
and that every vertex $v\in X$ satisfies $l(v)\geq 2$. From now on we are going to use this new partition and for simplicity we set
$$\alpha:=2\alpha_t,\ \eta:=\frac{\eta_t}{6}.$$

The proof will be split into a series of claims. Note that $G$ is $\frac{n}{2t}$-large.

\begin{claim}
\label{claim1}
For every $i$ the graph $G[S_i]$ is triangle-free.
\end{claim}
\begin{proof}
Let $\sigma$ be a maximal clique in $G[S_1]$ and let $c=|\sigma|$. Then $c\leq 2t-1$ because $G$ is $(2t-1)$-leveled and $\lk_G\sigma$ is nonempty. We have that $\lk_G\sigma$ is a $(2t-1-c)$-leveled graph of type $(t-1,\eta(1-2t\eta)^{-1},C)$, and so of type $(t-1,\eta_{t-1},C)$, which is $\frac{n}{2t}(1-2t\eta)$-large. By Proposition~\ref{prop:noshit} for sufficiently large $n$ we have $2t-1-c\geq 2(t-1)-1$, hence $c\leq 2$.
\end{proof}

As a consequence every clique in $G[\bigcup S_i]$ has size at most $2t$, and so every maximal clique in $G[\bigcup S_i]$ has nonempty intersection with each $S_i$.

\begin{claim}
\label{claim2}
There exists a clique in $G[\bigcup S_i]$ of size $2t$.
\end{claim}
\begin{proof}
Suppose that the largest clique $\sigma$ in $G[\bigcup S_i]$ has size $c<2t$. By Claim~\ref{claim1} and the last remark there exist an index $i$ and a vertex $v\in S_i$ such that $\sigma\cap S_i=\{v\}$. In particular, the link $\lk_G(\sigma\setminus\{v\})$ does not contain any edge of $G[S_i]$ (otherwise we would have obtained a clique of size $c+1$). Also, observe that $\sigma\setminus\{v\}$ is a maximal clique in $G[\bigcup_{j\neq i}S_j]$, since otherwise its superclique $\tau$, extended by some vertex of $S_i$, would have size $c+1$. It means that $\lk_G(\sigma\setminus\{v\})$ is a $(2t-c)$-leveled graph contained in $S_i\sqcup X$ and the part contained in $S_i$ is an independent set of size $k\geq\frac{n}{2t}(1-2t\eta)$. For large $n$ this is impossible because of Lemma~\ref{lem:indep}, as in the proof of Proposition~\ref{prop:noshit}.
\end{proof}

\begin{claim}
\label{claim3}
For every $i$ if $Y\subseteq S_i$ with $|Y|\geq \frac{5}{8}\cdot\frac{n}{t}$ then $G[Y]$ contains an edge.
\end{claim}
\begin{proof}
It suffices to consider $Y\subseteq S_1$. Let $\sigma$ be some maximal clique in $G[\bigcup S_i]$ of size $2t$ and let $\sigma\cap S_1=\{x,y\}$ for some edge $xy\in E(G)$. Then $H=\lk_G(\sigma\setminus\{x,y\})$ is a $1$-leveled graph contained in $S_1\sqcup X$ and the part contained in $S_1$ has $k\geq|S_1|(1-2t\eta)$ vertices. We see that $H$ occupies almost all of $S_1\sqcup X$ while $Y$ occupies much more than a half of that set, in particular 
$$|Y\cap V(H)|\geq |S_1|(1-2t\eta)-(|S_1|-\frac58\cdot\frac{n}{t})\geq \frac{1}{2}(|S_1|+C)\geq |V(H)|/2.$$ Since $H$ is a union of cycles, that implies that $G[Y\cap V(H)]$ contains an edge.
\end{proof}

\begin{claim}
\label{claim4}
For every $i$ the graph $G[S_i]$ has maximum degree $2$.
\end{claim}
\begin{proof}
Suppose $v\in S_1$ has three neighbors $u_1,u_2,u_3\in S_1$. We are going to inductively construct cliques $\sigma_k\subset \bigcup_{i=2}^kS_i$ for $k=1,\ldots,t$ such that $\sigma_k\subset\sigma_{k+1}$, $|\sigma_k|=2(k-1)$ and $v,u_1,u_2,u_3\in\lk_G\sigma_k$ for all $k$. This gives a contradiction since $u_1,u_2,u_3\in\lk_G(\sigma_t\cup\{v\})$, contrary to the fact that $\lk_G(\sigma_t\cup\{v\})$ is $0$-leveled.

We set $\sigma_1=\emptyset$. Suppose $\sigma_k$ was constructed and let $P=\{v,u_1,u_2,u_3\}\cup\sigma_k$. Then the set $Y_{k+1}=S_{k+1}\cap\bigcap_{x\in P}N(x)$ contains at least $(1-2t\eta)|S_{k+1}|$ vertices. By the previous claim there is an edge $a_{k+1}b_{k+1}\in G[Y_{k+1}]$ and we set $\sigma_{k+1}=\sigma_k\cup\{a_{k+1},b_{k+1}\}$.
\end{proof}

\begin{claim}
\label{claim5}
If $v\in X$ then $\deg(v,\bigcup S_i)\leq n(1-\frac{1}{t}-\frac{\eta}{3t})$.
\end{claim}
\begin{proof}

Let $X'\subset X$ be the set of all vertices $v$ such that $\deg(v,S_i)\leq 2$ for some $i$. We first prove the claim for $v\in X'$. Assume without loss of generality that $\deg(v,S_1)\leq 2$. Since $l(v)\geq 2$, there exists $j\neq 1$ such that $\deg(v,S_j)\leq \frac{n}{t}(1-\frac12\eta)$. Then
$$\deg(v,\bigcup S_i)\leq (t-2)\frac{n}{t}(1+\alpha)+\frac{n}{t}(1-\frac12\eta)+2\leq n(1-\frac{1}{t}-\frac{\eta}{3t})$$
for sufficiently large $n$.

Next consider the vertices $v\in X\setminus X'$. For every such $v$ and every $i$ we have $\deg(v,S_i)\geq 3$. Define $k(v)=|\{i~:~\deg(v,S_i)\leq \frac35\cdot\frac{n}{t}\}|$. The proof depends on the value of $k(v)$.

\subsection*{The case $k(v)\geq 3$} We have 
$$\deg(v,\bigcup S_i)\leq (t-3)\frac{n}{t}(1+\alpha)+3\cdot\frac35\cdot\frac{n}{t}\leq n(1-\frac{11}{10}\cdot\frac{1}{t}).$$

\subsection*{The case $k(v)=2$} Without loss of generality let $i=1,2$ be the indices for which $\deg(v,S_i)\leq \frac35\cdot\frac{n}{t}$. If for both of them we have $\deg(v,S_i)\geq \frac{1}{6}\cdot\frac{n}{t}$ then $\lk_Gv$ is a $(2t-2)$-leveled graph, which is $\frac{n}{6t}$-large of type $(t,6\eta,C)$, hence also of type $(t,\eta_t,C)$. For large $n$ this is not possible by Proposition~\ref{prop:noshit}. It means that for at least one of $i=1,2$ we have $\deg(v,S_i)\leq \frac{1}{6}\cdot\frac{n}{t}$. Then

$$\deg(v,\bigcup S_i)\leq (t-2)\frac{n}{t}(1+\alpha)+\frac35\cdot\frac{n}{t}+\frac{1}{6}\cdot\frac{n}{t}\leq n(1-\frac{6}{5}\cdot\frac{1}{t}).$$

\subsection*{The case $k(v)=1$} Assume $i=1$ is the only index with $\deg(v,S_i)\leq \frac35\cdot\frac{n}{t}$. Let $u_1,u_2,u_3\in S_1$ be any three neighbors of $v$. As in the proof of Claim~\ref{claim4} we can now construct a clique $\sigma=\{a_2,b_2,\ldots,a_t,b_t\}$ with $a_k,b_k\in S_k$ for $k=2,\ldots,t$ and such that $v,u_1,u_2,u_3\in\lk_G\sigma$. (The set $Y_{k+1}\subset S_{k+1}$ constructed in that proof now has at least $\frac{n}{t}(\frac35-4t\eta)$ vertices). But then $\lk_G(\sigma\cup\{v\})$ is a $0$-leveled graph which contains $3$ vertices $u_1,u_2,u_3$. That is a contradiction.

\subsection*{The case $k(v)=0$} Then $\lk_Gv$ is a $(2t-2)$-level graph of type $(t,\frac53\eta,C)$, hence also of type $(t,\eta_t,C)$, which is $\frac{3n}{5t}$-large. For large $n$ that is impossible by Proposition~\ref{prop:noshit}.
\end{proof}

Now we can count the number of edges in $G$ to show the desired bound. Let $x=|X|\leq C$. By Claims \ref{claim4} and \ref{claim5} we have
\begin{equation}
\label{eqn:totale}
\begin{aligned}
|E(G)|&\leq {t\choose 2}(\frac{n-x}{t})^2 + (n-x) + xn(1-\frac{1}{t}-\frac{\eta}{2t})+{x\choose 2}\\
&\leq\frac{t-1}{2t}n^2+n-x(\frac{\eta}{2t}n-C\cdot\frac{2t-1}{2t}).
\end{aligned}
\end{equation}
This completes the proof of Theorem~\ref{thm:denseshit}.
\end{proof}

\bigskip
We can now prove the main result of the paper.

\begin{proof}[Proof of Theorem~\ref{thm:dlevelbound}]
Suppose, contrary to what we want to prove, that $|E(G)|> \frac{s-1}{2s}n^2+n$. Then the assumptions of Corollary~\ref{corollary:main} are satisfied, hence for sufficiently large $n$ the graph $G$ is $(n,\alpha_s)$-flat of type $(s,\frac{\eta_s}{12},C')$ for some constant $C'=C'(s,C)$. However, since $G$ is $(2s-1)$-leveled, by Theorem~\ref{thm:denseshit} it must satisfy $|E(G)|\leq \frac{s-1}{2s}n^2+n$. This contradiction ends the proof.
\end{proof}

\section{Related results and problems}
\label{section:proofs}

In this section we discuss some related problems and possible relaxations of the assumptions of Theorem~\ref{thm:weakpseudo}.

As noted in Remark~\ref{remark:cliquecounting}, already the basic clique counting argument provides an upper bound for the number of edges which is optimal up to an $O(n)$ term. The advantage of that argument is that it works for flag weak pseudomanifolds of arbitrary dimension. More precisely, one immediately obtains the following result.

\begin{proposition}
\label{thm:anydim}
Let $d\geq 1$ and $s=\lfloor\frac{d+1}{2}\rfloor$. For every constant $C>0$ there is a $C'=C'(d,C)$ such that the inequality 
\begin{equation}\label{eq:withC}f_1(K)\leq \frac{s-1}{2s}f_0(K)^2+C'\cdot f_0(K)\end{equation}
holds for each of the following classes of complexes $K$:
\begin{itemize}
\item[a)] $d$-dimensional flag complexes with $f_s(K)\leq Cf_0(K)^s$,
\item[b)] $d$-dimensional flag complexes which satisfy the middle Dehn-Sommerville equation ($h_s(K)=h_{s+1}(K)$ when $d=2s$ or $h_{s-1}(K)=h_{s+1}(K)$ when $d=2s-1$),
\item[c)] flag simplicial $d$-manifolds whose Euler characteristic satisfies $\chi(K)\leq Cf_0(K)^s$,
\item[d)] flag $d$-Gorenstein$^*$ complexes.
\end{itemize}
\end{proposition}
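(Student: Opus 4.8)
The plan is to reduce all four cases to the single inequality $f_s(K)\le C''f_0(K)^s$ for a constant $C''=C''(d,C)$, and then to deduce \eqref{eq:withC} from the ``reverse'' clique-counting estimate of Theorem~\ref{thm:bolo}. For that last step, write $G=K^{(1)}$ and $n=f_0(K)$, so that $f_1(K)=|E(G)|$ and $f_s(K)=k_{s+1}(G)\le C''n^s$. Apply Theorem~\ref{thm:bolo} with $t=s$ and with $C''+1$ playing the role of the constant $C$ there, obtaining $C'=C'(s,C''+1)$. If $|E(G)|\ge\frac{s-1}{2s}n^2+C'n$, then Theorem~\ref{thm:bolo} forces $k_{s+1}(G)\ge(C''+1)n^s>C''n^s\ge f_s(K)$ for $n\ge1$, a contradiction; hence $|E(G)|<\frac{s-1}{2s}n^2+C'n$, which is \eqref{eq:withC}. (The finitely many small values of $n$ are absorbed into $C'$ exactly as in the proof of Theorem~\ref{thm:bolo}.) This is the clique-counting argument of Remark~\ref{remark:cliquecounting}, and it is the only appeal to extremal graph theory; the rest is linear bookkeeping with the $h$-vector.

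To produce $C''$ in case~a) there is nothing to do, since it is the hypothesis. For the other cases, expanding $\sum_i h_i(K)x^{d+1-i}=\sum_i f_{i-1}(K)(x-1)^{d+1-i}$ shows that each $h_j(K)$ is a fixed integer combination of $f_{-1}(K),\dots,f_{j-1}(K)$, and in particular that in $h_{d+1-s}(K)$ the number $f_s(K)$ appears with coefficient $1$ while all remaining terms involve only $f_{-1}(K),\dots,f_{s-1}(K)$. In case~b) the middle Dehn--Sommerville equation ($h_s=h_{s+1}$ when $d=2s$, $h_{s-1}=h_{s+1}$ when $d=2s-1$) then rearranges to $f_s(K)=\sum_{i=-1}^{s-1}a_i f_i(K)$ with coefficients $a_i=a_i(d)$, so $f_s(K)\le\bigl(\sum_i|a_i|\bigr)\binom{n}{s}\le C''n^s$. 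Case~d) follows from b), because a flag Gorenstein$^*$ complex is Eulerian and hence satisfies every Dehn--Sommerville equation, in particular the middle one. In case~c) with $d=2s-1$ odd a closed simplicial manifold is Eulerian and so again satisfies $h_{s-1}(K)=h_{s+1}(K)$, which puts us back in b). With $d=2s$ even a closed simplicial manifold satisfies the Dehn--Sommerville--Klee relations $h_{d+1-i}(K)-h_i(K)=(-1)^i\binom{d+1}{i}\bigl(\chi(K)-\chi(S^d)\bigr)$ (the correction terms involve only $\chi(K)$ because every proper link is a sphere); taking $i=s$ expresses $f_s(K)$ as a fixed integer combination of $f_{-1}(K),\dots,f_{s-1}(K)$ plus a fixed multiple of $\chi(K)-\chi(S^d)$, and the hypothesis $\chi(K)\le Cf_0(K)^s$ then yields $f_s(K)\le C''n^s$.

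I do not expect a serious obstacle: the substantive ingredient, Theorem~\ref{thm:bolo}, is quoted, and the remainder is the elementary passage between $f$- and $h$-vectors together with the (classical) Dehn--Sommerville and Dehn--Sommerville--Klee relations. The only spot that calls for attention is case~c) in even dimensions, where one must check that the Euler-characteristic term produced by the Dehn--Sommerville--Klee relations is bounded in the direction supplied by the hypothesis $\chi(K)\le Cf_0(K)^s$; for $d$ odd this is vacuous since $\chi(K)=0$, and for $d$ even it is precisely the reason that hypothesis is imposed. Everything else is routine.
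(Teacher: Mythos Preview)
Your proposal is correct and takes essentially the same approach as the paper: reduce parts b)--d) to a) by bounding $f_s(K)\le C''f_0(K)^s$ via the (generalized) Dehn--Sommerville relations, then invoke Theorem~\ref{thm:bolo}. Your write-up is somewhat more detailed than the paper's (you spell out the contrapositive application of Theorem~\ref{thm:bolo} and split case~c) by the parity of $d$), but the ingredients and logical structure are identical.
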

\begin{proof}
Part a) follows directly from Theorem~\ref{thm:bolo}. Regardless of the parity of $d$, the middle Dehn-Sommerville equation has the form $f_s(K)=\sum_{i=-1}^{s-1}a_{s,i}f_i(K)$, hence $f_s(K)\leq (\sum_i|a_{s,i}|)\cdot f_0(K)^s$ and therefore b) follows from a). Part b) immediately implies d). Finally for any $d$-manifold we have the generalized Dehn-Sommerville equations of Klee \cite{Klee}, namely $$h_{d+1-i}(K)-h_i(K)=(-1)^i{d+1\choose i}(\chi(K)-\chi(S^{d})),$$ and the middle of them again yields $f_s(K)\leq C_1 f_0(K)^s$ for some constant $C_1$, thus  reducing part c) to a).
\end{proof}

As in the odd-dimensional case, for spheres one is expecting an exact upper bound of the following form.
\begin{conjecture}[{\cite[Conj. 6.3]{NevoPet}}]
\label{conj:even}
If $s\geq 1$ and $K$ is a flag triangulation of $S^{2s}$ then
\begin{equation}\label{eq:s2s}f_1(K)\leq \frac{s-1}{2s}f_0(K)^2+(1+\frac{2}{s})f_0(K)-(4+\frac{2}{s}).\end{equation}
\end{conjecture}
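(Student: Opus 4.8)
\medskip
\noindent\textbf{Sketch of an approach.}
Since this is stated as a conjecture I will only propose an approach, and moreover to the \emph{asymptotic} form: for every $s$ and every $C>0$ there should be an $n_0$ such that $\eqref{eq:s2s}$ holds whenever $K$ is a flag $2s$-dimensional weak pseudomanifold with $f_0(K)\geq n_0$ satisfying the middle Dehn--Sommerville equation $h_s(K)=h_{s+1}(K)$ (by Proposition~\ref{thm:anydim} this includes flag triangulations of $S^{2s}$, and forces $f_s(K)\leq Cf_0(K)^s$ for a suitable $C$). In graph terms $G=K^{(1)}$ is then a $2s$-leveled graph with $n$ vertices and $k_{s+1}(G)\leq Cn^s$, and the conjectured extremal object is $C_{n/s}^{\ast s}\ast S^0$, the balanced join of $s$ cycles suspended once. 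I would begin exactly as in Theorem~\ref{thm:dlevelbound}: assuming $|E(G)|$ exceeds the right-hand side of $\eqref{eq:s2s}$, in particular $|E(G)|>\frac{s-1}{2s}n^2+n$, so Corollary~\ref{corollary:main} makes $G$ an $(n,\alpha)$-flat graph of type $(s,\eta,C')$; then, following the opening of the proof of Theorem~\ref{thm:denseshit}, absorb into the parts every exceptional vertex dense in all but at most one part, so that each remaining $v\in X$ has small degree in at least two parts.

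The new phenomenon is that, in the type-$(s,\eta,C')$ picture, the two vertices of the suspending $S^0$ must sit inside $X$: unlike in the odd-dimensional case (where Theorem~\ref{thm:denseshit} shows \emph{every} vertex of $X$ strictly lowers the edge count), here the goal is to show that at most two of them are essentially universal and that everything else is governed by the cycle structure. Concretely I would aim to prove, in analogy with Claims~\ref{claim1}--\ref{claim5}: (i) each $G[S_i]$ has maximum degree $\le 2$; (ii) at most two vertices of $X$ have degree $\ge n(1-\tfrac1s-\tfrac{\eta}{3s})$; (iii) every remaining vertex of $X$ has degree $\le n(1-\tfrac1s-\tfrac{\eta}{3s})$. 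Granting these, the edge count runs as in $\eqref{eqn:totale}$: the $s$ parts contribute at most $\binom{s}{2}\big(\tfrac{n-x}{s}\big)^2+(n-x)$, the (at most) two near-universal vertices contribute at most $2n$ minus the missing edge between them and the edges they lose, each of the remaining $x-2$ exceptional vertices contributes strictly less than $(1-\tfrac1s)n$, and a short arithmetic check --- the same one that evaluates $|E(C_{n/s}^{\ast s}\ast S^0)|$ --- recovers the right-hand side of $\eqref{eq:s2s}$, contradicting the assumption. (One also notes along the way that $x$ cannot be below $2$, since a join of unions of cycles is only $(2s-1)$-dimensional and a cone is not a weak pseudomanifold, so the optimum is genuinely $x=2$.)

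Statements (i) and (iii) should be reachable by the method of Claims~\ref{claim1}--\ref{claim5}: links of cliques inside $\bigcup S_i$, and links of would-be high-degree vertices, are lower-dimensional leveled graphs of type $(s-k,\eta',C)$ that either carry a configuration forbidden by Proposition~\ref{prop:noshit} and Lemma~\ref{lem:indep}, or else produce a $0$-leveled graph containing three vertices, exactly as in Claim~\ref{claim4}. The genuine obstacles are two. First, in \emph{even} dimension Proposition~\ref{prop:noshit} only forbids $(2s-4)$-leveled links of type $(s-1,\eta_{s-1},C)$, so it does not by itself rule out a triangle in some $G[S_i]$ (the link of such a triangle is $(2s-3)$-leveled, which is permitted); ruling triangles out --- or showing a triangle in $G[S_i]$ can occur only alongside an already-present pair of near-universal vertices --- will require a secondary stability argument inside $\lk_G v$ for $v$ in the triangle. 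Second, and more seriously, one must eliminate ``diffuse'' near-extremal configurations in which the extra unit of dimension is spread across several of the $S_i$ rather than concentrated in a single $S^0$-factor, and show that every such configuration loses $\Omega(n)$ edges against $C_{n/s}^{\ast s}\ast S^0$ with the \emph{correct} leading linear constant $(1+\tfrac2s)$ rather than merely $1+O(1)$; this is where I expect the real difficulty to lie. Finally, upgrading ``$f_0(K)$ sufficiently large'' to \emph{all} flag triangulations of $S^{2s}$, which is what the conjecture literally asserts, would need a genuinely different argument for small complexes, and is presumably why the statement is open.
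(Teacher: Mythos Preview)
The paper does not prove this statement: it is recorded as Conjecture~\ref{conj:even} and left open, with only the weaker $O(n)$-accurate bound of Proposition~\ref{thm:anydim} established in even dimensions. You correctly identify this and offer a heuristic rather than a proof, so there is nothing in the paper to compare your argument against.

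Two remarks on your sketch. First, the paper's conjectured extremizers are joins of $s-1$ cycles with an \emph{arbitrary} $(k+2)$-vertex flag triangulation of $S^2$; your $C_{n/s}^{\ast s}\ast S^0$ is the special case where that $S^2$ is the bipyramid, and any attempted uniqueness or structural statement would have to accommodate the full family. Second, there is a tension in your outline between the absorption step and claims (i)--(ii): in the extremal example the two suspension vertices have $l(v)=0$ (they are dense in every $S_i$), so the very move ``absorb every exceptional vertex dense in all but at most one part'' sends them into some $S_i$, where they then have degree roughly $|S_i|$, immediately contradicting your (i). One cannot simultaneously perform the absorption of Theorem~\ref{thm:denseshit} and expect the near-universal vertices to remain in $X$; the even-dimensional argument would need a modified cleaning step that detects and sets aside such vertices before enforcing the degree-$2$ structure on the parts. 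You flag difficulties in your final paragraph, but this particular one is more basic than the ``diffuse configuration'' issue you highlight.
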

In this case equality is achieved by the join of $s-1$ cycles of length $k$ with an arbitrary $(k+2)$-vertex flag triangulation of $S^2$, assuming that $f_0(K)=sk+2$.

Because the Euler characteristic of a $2s$-dimensional closed manifold is $O(f_0(K)^{s+1})$ (by Poincar\'e duality), but not necessarily $O(f_0(K)^s)$, the bounds of Proposition~\ref{thm:anydim} are not automatically satisfied by \emph{all} closed flag $2s$-manifolds. In fact for arbitrary $\varepsilon>0$ there are flag $2$-manifolds $K$ with  $f_1(K)=\Omega(f_0(K)^{2-\varepsilon})$, which matches the upper bound of $f_1(K)=o(f_0(K)^2)$ (see below). The situation in higher even dimensions is not known.

\begin{problem}
\label{prob:evenmani}
When $s\geq 2$, what is the maximum of $f_1(K)$ among all closed flag $2s$-manifolds $K$ with a fixed number of vertices?
\end{problem}

\medskip
One can also ask to what extent the assumption $f_s(K)\leq Cf_0(K)^s$ in Theorem~\ref{thm:weakpseudo} is needed. We will show that there are $(2s-1)$-dimensional flag weak pseudomanifolds which do not satisfy this condition. For this we need a result of \cite{Cycle}, that for any $\varepsilon>0$ there exist $n$-vertex graphs $\mathcal{H}_n^\varepsilon$, with arbitrarily large $n$, such that $|E(\mathcal{H}_n^\varepsilon)|=\Omega(n^{2-\varepsilon})$ and such that for every vertex $v$ the link $\lk_{\mathcal{H}_n^\varepsilon}v$ is a cycle of length at least $4$. In particular, $\mathcal{H}_n^\varepsilon$ is $2$-leveled and $K=\cl(\mathcal{H}_n^\varepsilon)$ is a flag $2$-manifold with $\Omega(f_0(K)^{2-\varepsilon})$ edges. Now the join $G_{2n}^\varepsilon=\mathcal{H}_n^\varepsilon\ast \mathcal{H}_n^\varepsilon$ is a family of $5$-leveled graphs with $k_4(G_{2n}^\varepsilon)=\Omega(n^{4-2\varepsilon})$. That means that a $5$-dimensional flag weak pseudomanifold $K$ can have $f_3(K)=\Omega(f_0(K)^{4-2\varepsilon})$.

Still, even though this approach fails, it is possible that the bound on the number of edges holds.
\begin{problem}
\label{prob2}
Is it true that any $(2s-1)$-leveled graph with $n$ vertices has at most $\frac{s-1}{2s}n^2+n$ edges?
\end{problem}
The answer to this problem is obviously positive when $s=1$.

\medskip
In even dimension $2s$ we cannot expect a similar bound since the join of $\mathcal{H}_{n/s}^\varepsilon$ with $s-1$ copies of the $n/s$-vertex cycle is a $2s$-leveled graph with $\frac{s-1}{2s}n^2+\Omega(n^{2-\varepsilon})$ edges. However, one can ask the following.
\begin{problem}
\label{prob3}
Is it true that any $2s$-leveled graph with $n$ vertices has at most $\frac{s-1}{2s}n^2+o(n^2)$ edges?
\end{problem}
Again, the answer is positive when $s=1$. It follows from \cite[Theorem 1]{Cycles2}, which says that a graph in which every edge belongs to at least one, but at most a fixed number of triangles, can only have $o(n^2)$ edges.

\subsection*{Acknowledgement} The author thanks Jan Hladk\'y and Eran Nevo for support and comments.



\section{Appendix: Odd-dimensional manifolds}
\label{section:oddmanifolds}
\setcounter{claim}{0}

The purpose of this section is to give an independent quick proof of Corollary~\ref{thm:mainodd}
in the case when $K$ is a flag simplicial $(2s-1)$-manifold. 

\begin{proof}[Proof of Corollary~\ref{thm:mainodd}, case b)]
Let $K$ be any flag simplicial $(2s-1)$-manifold with $n$ vertices. Let $G$ be the $1$-skeleton of $K$. For any vertex $v$ the link $\lk_Kv$ is a flag simplicial $2(s-1)$-sphere. 

A classical theorem of van Kampen and Flores \cite{vanKamp,Flores} (see also \cite[Sect 2.4]{Wagner}), which generalizes the standard result about planar graphs, states that for all $s\geq 1$ the space $\cl(K^s(3,\ldots,3))$ does not embed in $S^{2(s-1)}$. It follows that for all $v$ the link $\lk_Gv$ does not contain $K^s(3,\ldots,3)$ as a subgraph. As a consequence, $G$ does not contain $K^{s+1}(1,3,\ldots,3)$ as a subgraph.

By the main theorem of \cite{ErdosProduct} a sufficiently large graph which does not contain $K^{s+1}(1,3,\ldots,3)$ has at most $\frac{s-1}{2s}n^2+n$ edges. That ends the proof.
\end{proof}

The theorem of \cite{ErdosProduct} referred to in the above proof is more general. For any tuple $r_1\leq r_2\leq\cdots\leq r_{s+1}$ with $r_1\in\{1,2,3\}$ it characterizes, for sufficiently large $n$, all $n$-vertex graphs without a $K^{s+1}(r_1,\ldots,r_{s+1})$ which have the maximal possible number of edges. For $r_1=1$, $r_2=\cdots=r_{s+1}=3$ that characterization reduces to the claim that every such extremal graph is a join of $s$ graphs, each of maximal degree $2$. From there the bound on the number of edges immediately follows.

For $r_1=1$ that result is only stated, but not proved in \cite{ErdosProduct}. For the proof the reader is referred to \cite{Nonexistent}, which is not easily accessible. However, an independent proof can be obtained using the methods of this paper. The starting point is the stable version of the Erd\"os-Stone theorem \cite[p.184]{ErdosSatur}, which guarantees that an extremal $K^{s+1}(1,3,\ldots,3)$-free graph can be obtained from $K^s(\frac{n}{s},\ldots,\frac{n}{s})$ by changing $o(n^2)$ edges. 

\remove{
A procedure of separating exeptional vertices, similar to that of Corollary~\ref{corollary:main}, gives a decomposition which makes a sufficiently large $G$ into
$$\textrm{an}\ (n,\alpha)\textrm{-flat graph of type}\ (s,\eta,s\alpha n)$$
and such that every vertex $v\in X$ satisfies $\deg(v,S_i)\leq\frac{n}{t}(1-\frac12\eta)$ for at least two values of $i\in\{1,\ldots,t\}$, where $0<\alpha\ll\eta\ll 1$ are as small as necessary and chosen in advance.

Next we prove a series of claims.

\begin{claim}
\label{cll1}
If $v\in S_i$ then $\deg(v,S_i)\leq 2$.
\end{claim}
\begin{proof}
If not, then we can inductively construct in $\lk_Gv$ a subgraph $K^s(3,\ldots,3)$ containing $3$ vertices from each of the sets $S_j$, and we have a contradiction.
\end{proof}

\begin{claim}
\label{cll2}
If $v\in X$ and $\deg(v,S_i)\geq 3$ for some $i$ then there exists a $j\neq i$ such that $\deg(v,S_j)\leq 10s\eta n$.
\end{claim}
\begin{proof}
Suppose $i=1$. If for all $j\neq 1$ we have $\deg(v,S_j)\geq 10s\eta n$ then we can inductively construct a $K^s(3,\ldots,3)$ with $3$ vertices from each of the sets $S_1\cap N_G(v),\ldots,S_s\cap N_G(v)$ and we have a contradiction.
\end{proof}

\begin{claim}
\label{cll3}
If $v\in X$ then $\deg(v,\bigcup S_i)\leq n(1-\frac{1}{t}-\frac{\eta}{3t})$.
\end{claim}
\begin{proof}
We consider two cases. First, suppose that $\deg(v,S_i)\leq 2$ for some $i$, and let $i=1$ without loss of generality. There is a $j\neq 1$ such that $\deg(v,S_j)\leq\frac{n}{t}(1-\frac12\eta)$, and the total loss of edges from $v$ to $S_1\sqcup S_j$ contributes to the deficiency in $\deg(v,\bigcup S_i)$.

If for all $i$ we have $\deg(v,S_i)\geq 3$ then by the last claim we get $j\neq 1$ with $\deg(v,S_j)\leq 10s\eta n$ and subsequently a $k\neq j$ with $\deg(v,S_k)\leq 10s\eta n$. Then there is a large loss of edges from $v$ towards $S_j\sqcup S_k$ and the conclusion follows.
\end{proof}

Let $x=|X|\leq s\alpha n$. Using claims \ref{cll1} and \ref{cll3} we conclude the proof as in \eqref{eqn:totale}, obtaining
\begin{equation*}
|E(G)|\leq \frac{s-1}{2s}n^2+n-x(\frac{\eta}{2s}-\frac{(2s-1)\alpha}{2})n
\end{equation*}
where the expression in the bracket is positive when $\alpha$ is sufficiently small compared to $\eta$.
\end{proof}
}

It is very likely that the characterization of extremal $K^{s+1}(1,3,\ldots,3)$-free graphs holds for \emph{all} values of $n$. This would imply that the inequality \eqref{eq:thmmain} holds for all, not just sufficiently large, flag simplicial $(2s-1)$-manifolds.


\begin{thebibliography}{99}
\bibitem{Idiots} M.Adamaszek, J. Hladk\'y, \textit{Dense flag triangulations of $3$-manifolds via extremal graph theory}, preprint

\bibitem{Bela} B.Bollob\'as, \textit{On complete subgraphs of different orders}, Math. Proc. Cambridge Phil. Soc. 79 (1976), 19-24

\bibitem{Cycles2} L.H.Clark, R.C.Entringer, J.E.McCanna, L.A.Sz\'ekely, \textit{Extremal problems for local properties of graphs}, Australasian J. Comb. 4 (1991), 25-31

\bibitem{CD} R.Charney, M.Davis, \textit{The {E}uler characteristic of a non-positively curved, piecewise {E}uclidean manifold}, Pacific J. Math. 171 (1995), 117-137

\bibitem{DOkun} M.Davis, B.Okun, \textit{Vanishing theorems and conjectures for the $\ell^2$-homology of right-angled Coxeter groups}, Geometry and Topology 5 (2001), 7-74

\bibitem{ErdosSatur} P.Erd\"os, M.Simonovits, \textit{Supersaturated graphs and hypergraphs}, Combinatorica, 3 (1983), 181-192

\bibitem{ErdosProduct} P.Erd\"os, M.Simonovits, \textit{An extremal graph problem}, Acta Math. Acad. Sci. Hung. 22 (1971), 275-282

\bibitem{Flores} A.Flores, \textit{\"Uber die Existenz $n$-dimensionaler Komplexe die nicht im den $\mathbb{R}^{2n}$  topologisch einbettar sind}, Ergeb. Math. Kolloq. 5 (1933), 17--24

\bibitem{Andy} A.Frohmader, \textit{Face vectors of flag complexes}, Israel J. Math. 164 (2008), 153-164

\bibitem{Gal} S.R.Gal, \textit{Real root conjecture fails for five- and higher-dimensional spheres}, Discrete Comput. Geom. 34 (2005), 269-284


\bibitem{vanKamp} E.R. van Kampen, \textit{Komplexe in euklidischen R\"aumen}, Abh. Math. Sem. Univ. Hamburg (1932), 9:72-7

\bibitem{Klee} V.Klee, \textit{A combinatorial analogue of Poincar\'e's duality theorem}, Canadian J. Math. 16 (1964), 517-531

\bibitem{LS} L.Lov\'asz, M.Simonovits, \textit{On the number of complete subgraphs of a graph II}, Studies in Pure Mathematics (1983), 459-495

\bibitem{MurNevo} S.Murai, E.Nevo, \textit{The flag $f$-vectors of {G}orenstein$^*$ order complexes of dimension $3$}, preprint 

\bibitem{NevoPet} E.Nevo, T.K.Petersen, \textit{On $\gamma$-vectors satisfying the {K}ruskal-{K}atona inequalities}, Discrete Comput. Geom. 45 (2011), 503-521

\bibitem{NevoPetTenner} E.Nevo, T.K.Petersen, B.E.Tenner, \textit{The $\gamma$-vector of a barycentric subdivision}, J. Comb. Theory Ser. A 118 (2011), 1346-1380

\bibitem{Reiher} C.Reiher, \textit{The clique density theorem}, preprint

\bibitem{Cycle} \'A.Seress, T.Szab\'o, \textit{Dense graphs with cycle neighborhoods}, J. Comb. Theory B 63 (2) (1995), 281-293

\bibitem{Nonexistent} M.Simonovits, \textit{A method for solving extremal problems in graph theory, stability problems}, Theory of Graphs (Proc. Colloq., Tihany, 1966), 279-319, Academic Press, NY, 1968

\bibitem{Wagner} U.Wagner, \textit{Minors in Random and Expanding Hypergraphs}, Proc. 27th Annual ACM Symposium on Computational Geometry (SoCG), 2011, 351-360. 

\end{thebibliography}
\end{document}